\documentclass[12pt]{article}
\usepackage{amsmath}
\usepackage{amssymb}

\usepackage[dvips]{graphicx}
\usepackage[usenames,dvipsnames,svgnames]{xcolor}

\newtheorem{theorem}{Theorem}[section]
\newtheorem{definition}[theorem]{Definition}
\newtheorem{lemma}[theorem]{Lemma}

\newenvironment{proof}
{\noindent
{\bf Proof.}}
{\hfill $\square$\medskip

}

\newcommand{\R}{\mathbb{R}}

\newcommand{\Z}{\mathbb{Z}}
\newcommand{\N}{\mathbb{N}}
\newcommand{\C}{\mathbb{C}}
\newcommand{\calK}{{\mathcal K}}
\addtolength{\textwidth}{2cm}
\addtolength{\oddsidemargin}{-1cm}
\addtolength{\textheight}{2cm}
\addtolength{\topmargin}{-1cm}

\renewcommand{\Im}{\operatorname{Im}}
\renewcommand{\P}{\mathbb{P}}

\usepackage{hyperref}

\begin{document}
\title{Clothoid Fitting and Geometric Hermite Subdivision}

\author{Ulrich Reif and Andreas Weinmann}

\date{\today}
\maketitle

\begin{abstract}
We consider geometric Hermite subdivision for planar curves, i.e., iteratively refining an 
input polygon with additional
tangent or normal vector information sitting in the vertices.
The building block for the (nonlinear) subdivision schemes we propose is based 
on clothoidal averaging, 
i.e., averaging w.r.t.\ locally interpolating clothoids, which are curves of 
linear curvature. 
To define clothoidal averaging, we derive a new strategy to approximate 
Hermite interpolating clothoids.
We employ the proposed approach to define the geometric Hermite analogues of
the well-known Lane-Riesenfeld and four-point schemes.
We present results produced by the proposed schemes and discuss their features. 
In particular, we demonstrate that the proposed schemes yield visually
convincing curves.	
\end{abstract}

\section{Introduction}
Linear subdivision schemes are widely used in various areas such as geometric 
modeling, multiscale analysis and for solving PDEs,  
and are rather well studied; references are for instance 
\cite{cavaretta1991stationary,dyn2002subdivision,han2018framelets,
peters2008subdivision}.

In the last two decades also the interest in nonlinear subdivision schemes has 
significantly increased. One class of such schemes  
deals with scalar real valued data, but employs nonlinear 
averaging/prediction/interpolation techniques, e.g., 
\cite{donoho2000nonlinear,kuijt2002shape,cohen2003quasilinear,Goldman2009OnTS}, 
making the corresponding schemes nonlinear.
Motivation may be to get more robust estimators for processing data or to be
able to deal with discontinuities or nonuniform data. 
Another class of nonlinear schemes addresses  data which 
live in a nonlinear space, such as a Riemannian manifold or a Lie group, see 
for instance 
\cite{wallner2005convergence,xie2009smoothness,grohs2010general,
weinmann2010nonlinear,moosm}.
A third class considers data living in Euclidean space, typically of dimension 
$2$ or $3$, but the averaging rules are nonlinear to take account of their 
geometric characteristics. Such rules may be formulated in terms of angles, 
perpendicular bisectors, interpolating circles, and the like, rather than 
acting on each component separately, as linear schemes do.
References of such {\em geometric schemes} include 
\cite{aspert2003non, sabin2004circle, marinov2005geometrically, dyn2009four, 
cashman2013generalized}.

In contrast to linear schemes, for which a rather well established analysis is 
available, nonlinear schemes are less well understood, and there are ongoing 
efforts to devise new and to improve existing tools. However, due to the 
nonlinear nature and the diversity of the proposed schemes, not as general 
results as in the linear case can be expected. The investigation of a particular 
class of nonlinear schemes is likely to require an additional particular 
analysis component not covered by a general theory.
Papers providing an analysis framework for geometric curve subdivision are 
\cite{dyn2012geometric, ewald2015holder}.
The first reference derives
sufficient conditions for a convergent interpolatory planar subdivision scheme 
to produce tangent continuous limit curves. 
The second reference deals with subdivision schemes
which are geometric in the sense that they commute with similarities 
and derives a framework to establish 
$C^{1,\alpha}$- and $C^{2,\alpha}$-regularity of the generated limit curves.

In this paper, we present a family of {\em geometric Hermite subdivision 
schemes} for the generation of planar curves where the data to be 
refined are point-vector pairs, the latter serving as information on 
tangents or normals. 
Schemes refining point-vector pairs of that type were already 
suggested in \cite{chalmoviansky2007non, lipovetsky2016weighted}.
The basic idea of these two approaches 
is to locally fit circles to the data and then to sample new points from them,
but the specific methods are different. Also the strategies for 
determining new vectors are not the same so that the two schemes have 
significantly different shape properties.
In contrast, the approach proposed here relies on clothoids,
which are curves of linear curvature, rather than circles.
The top row of Figure~\ref{fig:compare} shows the refinement
of initial data consisting of two point-normal pairs by the so-called {\em 
clothoid average}, as described in this paper. The resulting curve is S-shaped 
and interpolates the initial data. Also the scheme of 
Chalmoviansky and J{\"u}ttler \cite{chalmoviansky2007non},
as shown in the middle row, interpolates the initial data, but has three 
turning points, leading to a less natural shape and a less optimal distribution of 
curvature. The scheme of Dyn and Lipovetsky \cite{lipovetsky2016weighted}, 
shown in the bottom row, produces 
a straight line (as it would for any choice of parallel normals).
This is likely not to address the designer's intent, 
and it also does not interpolate the normal directions specified at the endpoints. 
The good performance of our scheme is related to the 
fact that it is not based on the reproduction of circles, which can be seen as 
the geometric analogue of quadratic polynomials,
but on the reproduction of clothoids, which can be seen as the geometric 
analogue of cubic polynomials. Thus, it is able to mimic a much larger variety 
of shapes.

The idea of geometric planar spline fitting, i.e., fitting splines based on 
clothoids as building blocks can be traced through the literature for more than 
50 years  
\cite{birkhoff1964piecewise, stoer1982curve, mehlum1974nonlinear, 
meek1992clothoid, hoschek1992grundlagen,coope1993curve}. 
Further recent contributions are for instance
\cite{bertolazzi2018interpolating}
presenting an alternative to \cite{stoer1982curve} for the computation of a 
$C^2$ interpolating clothoid spline.
Hermite interpolation problems w.r.t.\ clothoids are for instance the topic of \cite{bertolazzi2015g1}. 
We also point out
\cite{meek2009two} where the authors employ so called e-curves as approximative substitutes for
clothoids in the context of Hermite interpolation. 

Maybe the two striking reason why clothoid splines and approximations 
of them are still a topic in the literature
are as follows: (i) the corresponding clothoid splines are rather expensive two 
compute; (ii) there are plenty of applications. Let us discuss these points in 
more details.
Concerning (ii),
for a long time clothoids have been used 
by route designers as transitional curves between
straight lines and circular arcs, and between circular arcs of different radii;
see for instance \cite{meek1990offset,baass1984use}.
Nowadays, they are further used in connection with path planning for autonomous 
vehicles,  
e.g., \cite{berkemeier2015clothoid, alia2015local,biral2012intersection}
in computer vision and image processing \cite{kimia2003euler,lun2015inpainting},
in curve editing for design purposes \cite{havemann2013curvature}
or representing hand-drawn strokes sketched by a user 
\cite{mccrae2009sketching,baran2010sketching}.
Concerning (i), many of the above applications are rather time critical and it 
is often important 
to have algorithms which are as fast as possible at a given (sometimes moderate) 
approximation quality. Solving clothoidal spline fitting problems up to high 
precision can be done rather fast \cite {stoer1982curve} but even this is 
sometimes too expensive or not needed. 
Instead, frequently faster strategies typically using some approximation are 
employed.
For instance, \cite{baran2010sketching} locally fits clothoids and arcs as 
primitives and then optimizes w.r.t. a certain graph to obtain a global fit.
The paper \cite{havemann2013curvature}
uses a 
variational approach iteratively inserting control points 
and optimizing them such as to generate a polyline with linear
discrete curvature (approximating the clothoid segment; for details see also 
\cite{schneider2000discrete}.)
We mention that this approach uses subdivision (explicitly the analogue of 
corner cutting) for clothoid blending.
Other approaches replace clothoids by approximating curve segments which are 
easier to handle, e.g., 
\cite{meek2004arc,meek2009two}.
We point out that the clothoidal subdivision schemes suggested here may be 
used as a computationally rather cheap alternative to clothoid splines and that 
they may be employed in the various applications discussed above.

The paper is organized as follows:
After presenting some basic facts about two-point Hermite interpolation with 
clothoids in the next section, we consider its approximate solution 
in Section~\ref{sec:ApproxSol}. The formula we propose is explicit, fast to 
evaluate, and yields a very small error for a large range 
of input data.
In Section~\ref{sec:SubDivCA},
this result is used to define the so-called {\em clothoid average} of a pair of 
points and corresponding tangent directions, which in turn serves as a 
building block for new families of {\em geometric Hermite subdivision 
schemes.} 
In particular, we obtain geometric Hermite analogues of the 
Lane-Riesenfeld schemes and the four-point scheme.
In Section~\ref{sec:Examples}, we present results produced by the proposed 
schemes and discuss their features to illustrate the potential of the new 
method. As a first theoretical result, Section~\ref{sec:Proof} establishes 
convergence and $G^1$-continuity of the Lane-Riesenfeld-type algorithm of degree 
$1$. Concluding remarks and an outlook are given in 
Section~\ref{sec:Conclusion}.

\begin{figure}
\centering
\includegraphics[width=\textwidth]{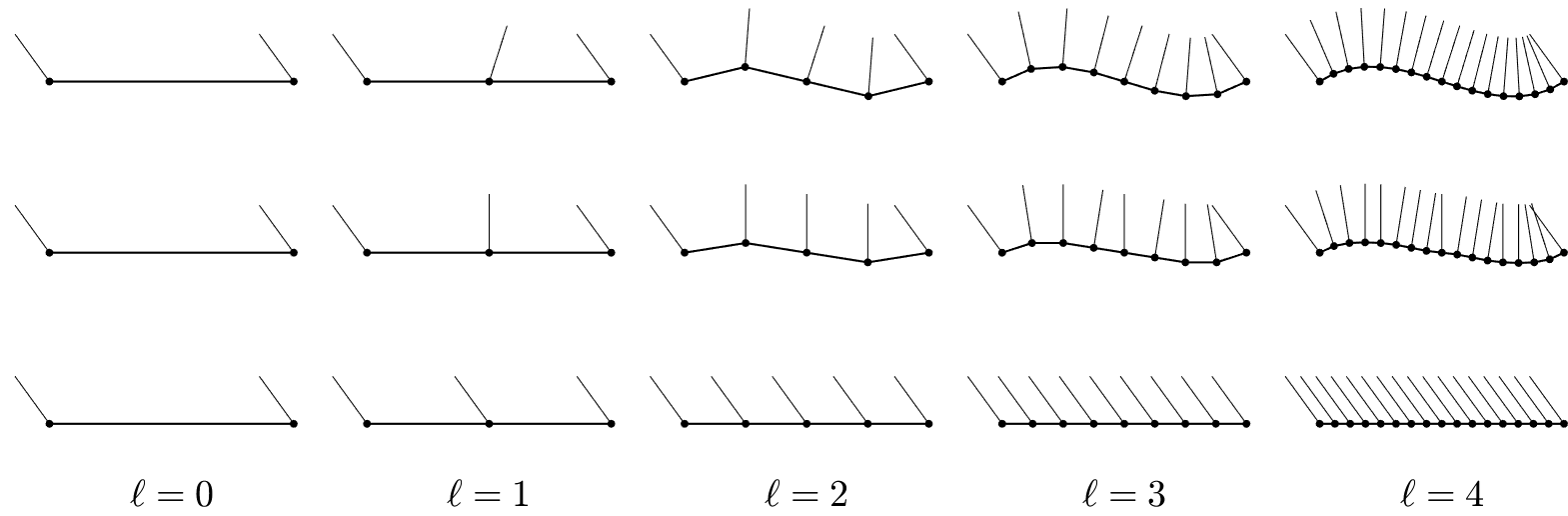}
\caption{Midpoint refinement using the proposed clothoid average {\em (upper row)},
the circle average of \cite{chalmoviansky2007non} {\em (middle row)}, and
the circle average of \cite{lipovetsky2016weighted} {\em (lower row)}. 
We always display the points $p_j^\ell$
together with the normals $n^\ell_j = i \exp(i\alpha^\ell_j)$ at level $\ell$.
}
\label{fig:compare}
\end{figure}

\section{Two-point interpolation}
\label{sec:TwoPInterpol}

In the following, points in the plane, and in particular images of planar 
curves, are considered as complex numbers. In this sense,
let $p : [0,1] \to \C$ be a
twice differentiable function parametrizing a planar curve which is 
{\em regular} in the sense that the {\em velocity} $v := |p'|$
vanishes nowhere. It is called {\em uniform} if the velocity is constant.
According to the lifting lemma, the tangent vector can be expressed 
in the form $p' = v \exp(i\alpha)$ with a differentiable  function
$\alpha : [0,1]\to \R$, called the {\em tangent angle} of $p$. 
We write $\alpha = \arg p'$ for brevity, and assume throughout that 
$\alpha$ is unrolled suitably so that jumps are avoided.
The {\em curvature} of $p$ is $\kappa := \alpha'/v = \Im \bar p' p''/v^3 $.
In the uniform case, on which we focus below,
curve, velocity, and tangent angle are related by the formula
\begin{equation}
\tag{1a}
\label{eq:p}
 p(t) = p(0) + v \int_0^t \exp(i\alpha(s))\, ds
 ,\quad 
 t \in [0,1]
 .
\end{equation}
\addtocounter{equation}{1}
The integral appearing here is abbreviated by 
\[
 I(\alpha,t) := \int_0^t \exp(i\alpha(s))\, ds
 ,\quad 
 I(\alpha) := I(\alpha,1)
 ,
\]
so that $p = p(0) + v I(\alpha,\cdot)$.
A curve $q : [0,1] \to \C$ starting at $q_0 := q(0) = 0$ and ending at $q_1 := 
q(1) \in \R^+$ is said to be in {\em normal position}. To tell this from the general case, 
we use the
letters $w = |q'|,\beta = \arg q'$, and $\lambda = \beta'/w$ to denote the
velocity, tangent angle, and curvature, respectively. 
We have the relation
\begin{equation}
\tag{1b}
\label{eq:q}
 q(t) = w I(\beta,t)
 ,\quad 
 t \in [0,1]
 .
\end{equation}
Curves in general and normal position are linked
by similarity. Denoting the secant between the endpoints of $p$ by 
$d := p_1-p_0$ and its angle by $\varphi := \arg d$, we have the relations
\begin{equation}
\label{eq:sim}
 p = p_0 + \frac{d}{q_1} q
 ,\quad 
 v = \frac{|d|}{q_1} w
 ,\quad
 \alpha = \beta + \varphi
 .
\end{equation}
Let $J = \{0,1\}$.
Given points $p_J = (p_0,p_1)$ with $d := p_1-p_0 \neq 0$ and angles 
$\alpha_J = (\alpha_0,\alpha_1)$, the corresponding {\em two-point
Hermite interpolation problem} is to find a curve $p$
such that 
\begin{equation}
 \label{eq:twopointG1}
 p(j) = p_j
 ,\quad
 \alpha(j) = \alpha_j
 ,\quad 
 j \in J
 .
\end{equation}
A special case of the above problem is to find a curve $q$ such that 
\begin{equation}
 \label{eq:twopointG1norm}
 q(j) = j
 ,\quad
 \beta(j) = \beta_j
 ,\quad 
 j \in J
 ,
\end{equation} 
for given $\beta_J$. We recall that we use letters $q,\beta$ to 
indicate that the sought curve is in normal position.
Figure~\ref{figure0} illustrates the setting.
While it is simple to specify curves merely satisfying these constraints, the 
challenge is to find solutions which are fair in some sense.
For instance, it is a classical task to solve \eqref{eq:twopointG1} 
in the space of clothoids, which are curves characterized by a linear curvature 
profile. In principle, this nonlinear problem is well understood and various 
more or less complicated methods for its numerical treatment are described in 
the literature, see for instance \cite{WALTON200986,bertolazzi2015g1} and the 
references therein.
Before we present our own approximate approach in the next section, we 
introduce some notation and basic facts.
\begin{figure}
\centering
\includegraphics[height=6cm]{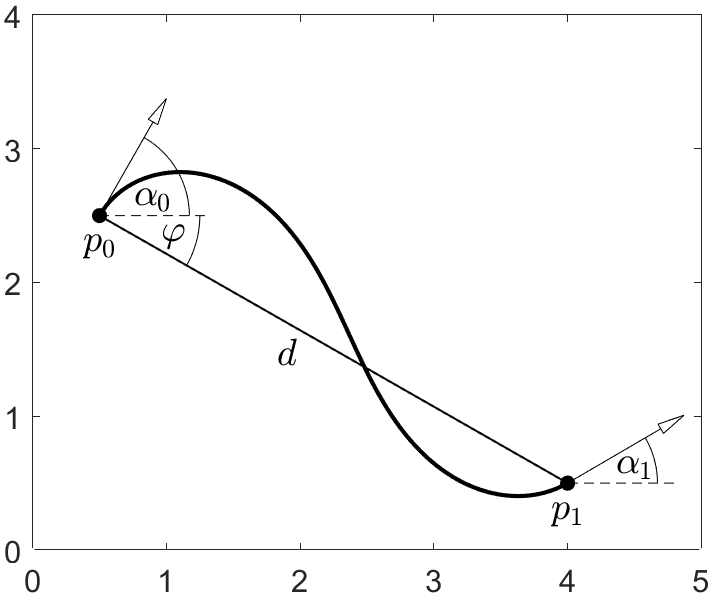}
\qquad 
\includegraphics[height=6cm]{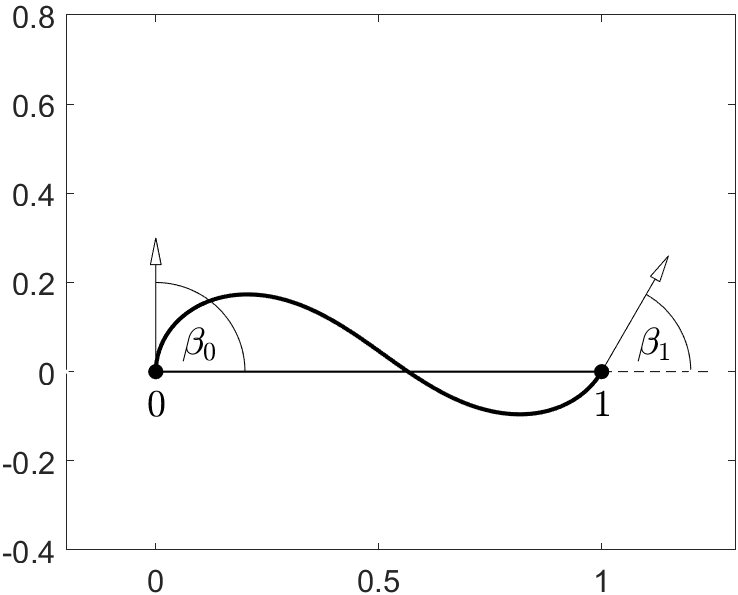}
\caption{Interpolation problem in general position {\it (left)} and 
normal position {\em (right)}.}
\label{figure0}
\end{figure}

Using the symbol $\P_n$ to denote the space of polynomials of degree at most $n$ over the 
unit interval, we define 
\[
 \calK_n := \{p: \kappa \in \P_n\}
\]
as the set of all uniform curves $p: [0,1] \to \C $ with curvature in 
$\P_n$.
The corresponding subset of such curves in normal position is denoted by $\calK_n^+$.
In particular, $\calK_0$  contains straight lines and circular arcs,
while curves in $\calK_1 \setminus \calK_0$ are segments of 
{\em clothoids}. 
The tangent angle $\alpha = \int \kappa$ of
a clothoid is a quadratic polynomial.
For clothoids,
the integral appearing in \eqref{eq:p} can be transformed to the so-called
{\em Fresnel integral} $F(x) := \int_0^x \exp(iu^2)\, du$, which 
does not possess a finite representation with respect to elementary functions.
For later use we state the following lemma.
\begin{lemma}
\label{lem:embedding}
A regular curve $p \in  \calK_1$ is embedded unless
it parametrizes a full circle, i.e., unless 
$p \in \calK_0$ and $|\alpha_1-\alpha_0| \ge 2\pi$.
\end{lemma}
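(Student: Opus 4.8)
The plan is to show that, apart from the full-circle case, the curve $p$ is \emph{injective}; since $[0,1]$ is compact and $p$ is continuous, injectivity already makes $p$ a homeomorphism onto its image, i.e.\ an embedding. By \eqref{eq:sim} a similarity carries $p$ to normal position, preserves membership in $\calK_1$ and $\calK_0$, preserves the total turning $\alpha_1-\alpha_0$, and is injective; so I may argue with the representation $p=p(0)+v\,I(\alpha,\cdot)$ of \eqref{eq:p}, where $v>0$ and the tangent angle $\alpha$ is a polynomial of degree at most $2$. A coincidence $p(t_1)=p(t_2)$ with $t_1<t_2$ is equivalent to $\int_{t_1}^{t_2}\exp(i\alpha(s))\,ds=0$, so everything reduces to showing that this integral never vanishes on a nondegenerate subinterval.

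First I dispose of $\calK_0$, where $\kappa$ is constant and $\alpha$ affine. If $\alpha'\equiv0$ the integral is $(t_2-t_1)\exp(i\alpha)\neq0$ (a straight segment). If $\alpha'\equiv m\neq0$ then $\int_{t_1}^{t_2}\exp(i\alpha)\,ds=(\exp(i\alpha(t_2))-\exp(i\alpha(t_1)))/(im)$, which vanishes precisely when $\alpha(t_2)-\alpha(t_1)\in2\pi\Z\setminus\{0\}$. Such $t_1<t_2$ in $[0,1]$ exist if and only if $|\alpha_1-\alpha_0|\ge2\pi$, i.e.\ exactly in the excluded full-circle case. This recovers the stated exception.

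Now let $p\in\calK_1\setminus\calK_0$, so $\alpha(s)=As^2+Bs+C$ with $A\neq0$. Completing the square and substituting $r=\sqrt{|A|}\,(s+B/(2A))$ turns $\int_{t_1}^{t_2}\exp(i\alpha)\,ds$ into a nonzero multiple of $\int_{r_1}^{r_2}\exp(i\,\mathrm{sgn}(A)\,r^2)\,dr$, with $r_1<r_2$. Up to complex conjugation (the case $A<0$) this is $F(r_2)-F(r_1)$, where $F(x)=\int_0^x\exp(iu^2)\,du$ is the Fresnel integral. Thus, for genuine clothoids, the whole lemma reduces to the injectivity of $F$ on $\R$, i.e.\ to the classical fact that the Euler spiral is a simple curve.

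I expect this last point to be the crux. Parametrised by $u$, the Euler spiral $F$ has unit speed, $|F'|=1$, and signed curvature $\kappa(u)=2u$, which is strictly increasing on all of $\R$. On each of the half-lines $u>0$ and $u<0$ the curvature is thus strictly monotone and nonzero, so the Tait--Kneser theorem applies: the osculating circles along such an arc are pairwise disjoint and nested. For $u_1<u_2$ on the same side the two circles are therefore disjoint with one enclosing the other, so one of the boundary points $F(u_1)$, $F(u_2)$ lies strictly inside the disk bounded by the other circle; hence $F(u_1)\neq F(u_2)$. This gives injectivity of $F$ on each half-line separately and covers arbitrarily long clothoid arcs, i.e.\ arbitrarily large total turning. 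The remaining case $u_1<0<u_2$, an arc straddling the inflection where the osculating-circle argument degenerates, is settled by a sign argument: with $\tau=s^2$ one has $\Im F(u)=\int_0^u\sin(s^2)\,ds=\tfrac12\int_0^{u^2}\tau^{-1/2}\sin\tau\,d\tau$, an alternating sum of the half-wave integrals $\int_{k\pi}^{(k+1)\pi}\tau^{-1/2}|\sin\tau|\,d\tau$; these strictly decrease because $\tau^{-1/2}$ does, so $\Im F(u)>0$ for every $u>0$ and, by oddness of $F$, $\Im F(u)<0$ for $u<0$. Points from the two sides then lie in different open half-planes and both differ from $F(0)=0$, which completes the injectivity of $F$ and hence the proof.
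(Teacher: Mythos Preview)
Your argument is correct and shares its core with the paper: the paper's entire proof is the single sentence ``The proof follows immediately from the Tait--Kneser Theorem.'' You go further in two respects. First, you treat the $\calK_0$ case by explicit computation, which is needed since Tait--Kneser requires \emph{strictly} monotone curvature and so says nothing about circular arcs; this also pins down the full-circle exception precisely. Second, and more substantively, for genuine clothoids you split at the inflection $u=0$: Tait--Kneser on each half-line handles arcs lying on one side, while the cross-inflection case $u_1<0<u_2$ is settled by your alternating-series argument showing $\Im F(u)>0$ for $u>0$. That last step is not covered by a bare appeal to Tait--Kneser unless one explicitly admits the degenerate osculating ``circle'' (the tangent line) at the inflection and checks nesting there, so your version is more self-contained than the paper's one-liner, at the cost of some extra work.
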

The proof follows immediately form the Tait-Kneser Theorem.\\

In the following, for simplicity of wording, the term
{\em clothoid} addresses not only segments of true clothoids, but 
also segments of circles and straight lines. In this sense,
we consider the solution of \eqref{eq:twopointG1} with clothoids,
i.e., with curves $p\in \calK_1$.
By similarity according to \eqref{eq:sim},
this problem can be reduced to \eqref{eq:twopointG1norm} with data $\beta_J = 
\alpha_J-\arg d$, which are the angles between the secant $d$ and the
boundary tangent angles of $p$.
Using the quadratic Lagrange polynomials
\[
   \ell^2_0 := (t-1)(2t-1)
   ,\quad 
   \ell^2_{1/2}(t) := 4t(1-t)
   ,\quad 
   \ell^2_1(t) := t(2t-1)
   ,
\]
with respect to the break points $0,1/2,1$, the tangent angle of $q$ can be 
written as
\[
  \beta = \beta_0 \ell^2_0 + \beta_{1/2} \ell^2_{1/2} + \beta_1 \ell^2_1
.
\]
Hence, given $\beta_J$,
the sought solution is characterized by the value $\beta_{1/2} 
= \beta(1/2)$ and the velocity $w$ by means of \eqref{eq:q}. The task is 
to find these two values.
Figure~\ref{figure1} demonstrates that the solution of 
\eqref{eq:twopointG1norm} 
is not unique. More precisely, it is known that for each pair $\beta_J$ there 
exists a countable family of solutions, but in applications, one is typically 
interested in curves avoiding excess rotation. For instance, if the 
boundary data $\beta_J$ are small in modulus, also the overall maximum 
$\|\beta\|_\infty$ 
of tangent angles should be small. The following theorem guarantees existence 
of such a solution. 
\begin{figure}
\centering
\includegraphics[width=13cm]{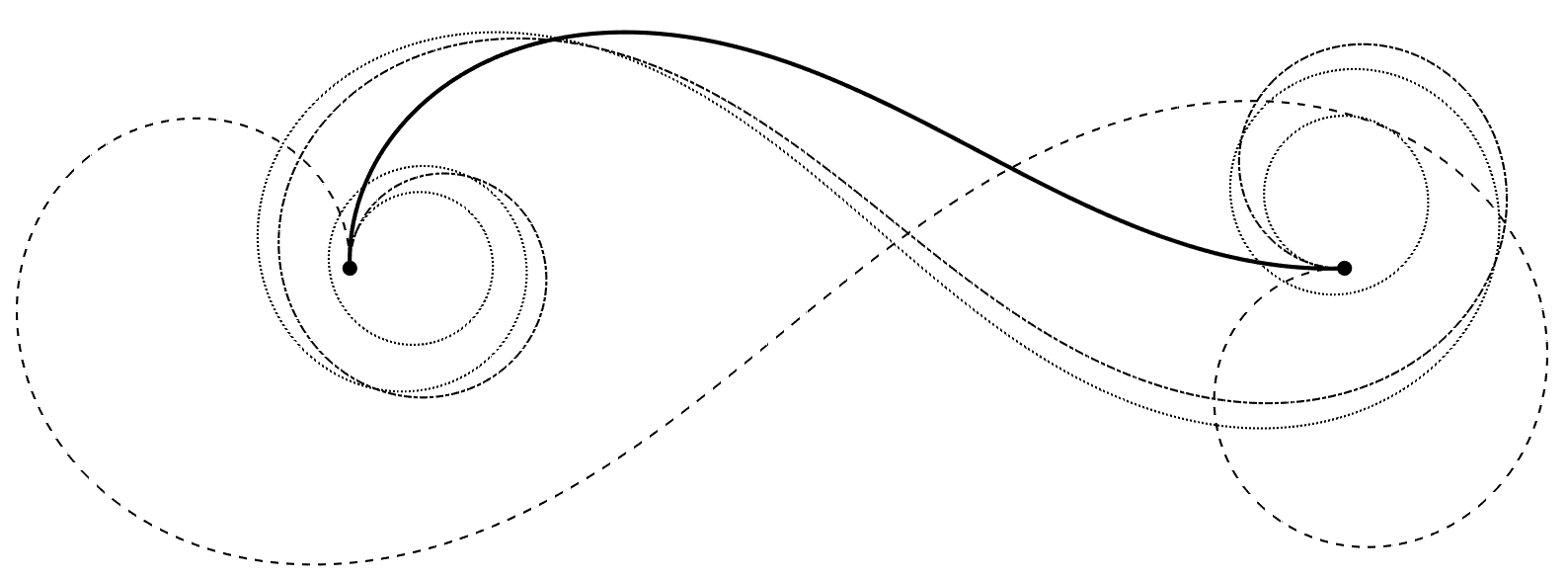}
\caption{Four out of infinitely many clothoid solutions to the geometric Hermite
interpolation problem \eqref{eq:twopointG1norm} with angles $\beta_0 = \pi/2, 
\beta_1 = 0$. 
The solid line shows the preferred choice, which avoids excess rotation.}
\label{figure1}
\end{figure}

\begin{theorem}
\label{thm:1}
There exists a smooth function $F : U \to \R^2$, defined on some 
neighborhood $U = (-u,u)^2$ of the origin, with 
\[
 F(0,0) =
 \begin{bmatrix}
  0\\1
 \end{bmatrix}
  ,\quad 
 DF(0,0) =
 \begin{bmatrix}
  -1/4 & -1/4 \\
  0 & 0
 \end{bmatrix}
  ,
\]
and the following property: Let
\[
 \begin{bmatrix}
 \beta_{1/2} \\ w
 \end{bmatrix}
 := F(\beta_0,\beta_1)
 ,\quad 
 \beta := \beta_0 \ell^2_0 + \beta_{1/2} \ell^2_{1/2} + \beta_1 \ell^2_1
 ,
\]
then the tangent angle $\beta$ and the velocity $w$ 
define a solution $q = w I(\beta,\cdot) \in \calK_1^+$ of 
\eqref{eq:twopointG1norm}. In particular, $I(\beta) = 1/w$ is real
and positive.
\end{theorem}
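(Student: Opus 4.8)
The plan is to reduce the vector condition $q(1)=1$ to a single scalar equation and then invoke the implicit function theorem. Writing $q = w\,I(\beta,\cdot)$ with $\beta$ in the prescribed Lagrange form, the constraints $q(0)=0$ and $\beta(j)=\beta_j$ for $j\in J$ hold automatically for every choice of $\beta_{1/2}$ and $w>0$, since $I(\beta,0)=0$ and $\ell^2_0,\ell^2_{1/2},\ell^2_1$ interpolate at the break points $0,1/2,1$. The only remaining requirement is $w\,I(\beta)=1$. As $w$ is real, this forces $I(\beta)$ to be real and positive; conversely, once $I(\beta)\in\R^+$ is secured, one simply sets $w := 1/I(\beta)$. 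Thus everything hinges on choosing $\beta_{1/2}$, as a function of $(\beta_0,\beta_1)$, so that $I(\beta)$ becomes real.

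First I would introduce the scalar map
\[
 G(\beta_0,\beta_1,\beta_{1/2}) := \Im I(\beta) = \int_0^1 \sin\beta(s)\,ds,
 \qquad
 \beta = \beta_0\ell^2_0 + \beta_{1/2}\ell^2_{1/2} + \beta_1\ell^2_1 .
\]
Because the integrand is smooth and the domain compact, $G$ is smooth and one may differentiate under the integral sign. At the origin $\beta\equiv 0$, so $G(0,0,0)=0$, and
\[
 \partial_{\beta_{1/2}}G(0,0,0) = \int_0^1 \cos(0)\,\ell^2_{1/2}(s)\,ds = \int_0^1 4s(1-s)\,ds = \tfrac23 \neq 0 .
\]
Hence the implicit function theorem yields a smooth function $\beta_{1/2}=g(\beta_0,\beta_1)$ on a square neighbourhood $U=(-u,u)^2$ with $g(0,0)=0$ and $G(\beta_0,\beta_1,g(\beta_0,\beta_1))\equiv 0$. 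The companion integrals $\partial_{\beta_0}G(0,0,0)=\int_0^1\ell^2_0\,ds=\tfrac16$ and $\partial_{\beta_1}G(0,0,0)=\int_0^1\ell^2_1\,ds=\tfrac16$ then give, via the chain rule, $\partial_{\beta_0}g(0,0)=\partial_{\beta_1}g(0,0)=-(1/6)/(2/3)=-1/4$.

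Next I would define the second component through $w := 1/\Re I(\beta) = 1/\int_0^1\cos\beta(s)\,ds$, now with $\beta_{1/2}=g(\beta_0,\beta_1)$. At the origin this integral equals $1$, so $w(0,0)=1$ and, after shrinking $u$ if necessary, $\Re I(\beta)>0$ throughout $U$; consequently $w$ is smooth and positive, and $I(\beta)=\Re I(\beta)=1/w\in\R^+$ as required. Differentiating $w=1/\Re I(\beta)$ and using that $\sin\beta$ vanishes at the origin shows $\partial_{\beta_0}w(0,0)=\partial_{\beta_1}w(0,0)=0$, so setting $F:=(g,w)$ produces exactly the claimed $F(0,0)$ and $DF(0,0)$. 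Finally, because $\beta$ is quadratic its derivative is affine, so the curvature $\lambda=\beta'/w$ is linear and $q\in\calK_1$; together with $q(0)=0$ and $q(1)=1\in\R^+$ this places $q$ in normal position, i.e.\ $q\in\calK_1^+$.

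The only genuine subtlety, and the step I expect to need the most care, is the transversality check $\partial_{\beta_{1/2}}G(0,0,0)=\tfrac23\neq0$: this is precisely what makes the imaginary-part equation solvable for the free parameter $\beta_{1/2}$ and is the reason a clean one-variable implicit function argument works at all. Everything else---smoothness, differentiation under the integral, and positivity of $\Re I(\beta)$---is routine once this pivot derivative is seen to be nonzero.
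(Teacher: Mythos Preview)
Your argument is correct and, if anything, somewhat cleaner than the paper's. The paper takes an indirect route: it parametrizes clothoids by the pair $(\alpha_0,\alpha_1)$ via $\alpha=\alpha_0\ell^2_0+\alpha_1\ell^2_1$, forms $p=I(\alpha,\cdot)$, normalizes to $q=p/p(1)$, and assembles the map $\Phi(\alpha_0,\alpha_1)=[\beta_{1/2},w,\beta_0,\beta_1]^T$ as a surface in $\R^4$; it then applies the implicit function theorem to invert the last two components, obtaining $F(\beta_0,\beta_1)$ from the inverse of the lower $2\times2$ block of $D\Phi(0,0)$. You instead isolate the single scalar constraint $\Im I(\beta)=0$ in the unknown $\beta_{1/2}$ and apply the one-variable implicit function theorem directly, reading off $w=1/\Re I(\beta)$ afterwards. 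Your pivot computation $\partial_{\beta_{1/2}}G(0,0,0)=\int_0^1\ell^2_{1/2}=2/3$ plays exactly the role of the nonvanishing $2\times2$ determinant in the paper (which is also $2/3$), and the resulting derivatives of $g$ and $w$ match the claimed $DF(0,0)$ by the same Lagrange-integral values $\int_0^1\ell^2_0=\int_0^1\ell^2_1=1/6$. The paper's parametrization has the mild conceptual benefit of exhibiting each solution explicitly as a normalized clothoid from the outset, while your reduction to a single scalar equation is shorter and makes transparent why only one real condition needs to be solved.
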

Actually, it is possible to choose the domain $U=(-\pi,\pi)^2$ for $F$, 
covering almost all possible pairs of boundary tangent angles, but this is not 
needed 
here.
\medskip 

\begin{proof}
The idea is to parametrize the set solutions of 
\eqref{eq:twopointG1norm} for varying $\beta_J$ as a two-dimensional 
surface in 
$\R^4$ and then to apply the implicit function theorem.
Given $\alpha_J \in \R^2$, define the tangent angle
$\alpha := \alpha_0 \ell^2_0 + \alpha_1 \ell^2_1$ and the clothoid 
$p := I(\alpha,\cdot)\in \calK_1$. 
Then $q := p/p(1) \in\calK_1^+$ connects $q(0) = 0$ and $q(1) = 1$.
Its velocity is $w = 1/|p(1)|$, and its tangent angle $\beta = \alpha - \arg 
p(1)$ has 
values $\beta_j := \beta(j) = \alpha_j- \arg p(1),j \in J,$ and 
$\beta_{1/2} := \beta(1/2) = - \arg p(1)$.
With these data, we define the surface
$\Phi(\alpha_0,\alpha_1) :=
[\beta_{1/2},w, \beta_0,\beta_1]^T$. It is well defined and smooth 
in a neighborhood of the origin since $p(1)=1$ for $\alpha_0=\alpha_1=0$.
Let $\|\alpha_J\|_\infty = h$. Then $\|\alpha\|_\infty = h$ and 
\[
  p(1) = 1 +\int_0^1 i\alpha(s)\, ds  + O(h^2) = 
  1 + i\,\frac{\alpha_0+\alpha_1}{6} + O(h^2)
\]
so that $w = 1 + O(h^2)$ and $\arg p(1) = (\alpha_0+\alpha_1)/6 + O(h^2)$.
We conclude that
\[
 \Phi(0,0) = 
 \begin{bmatrix}
  0\\1\\0\\0
 \end{bmatrix}
 ,\quad 
 D\Phi(0,0) = \frac{1}{6}
 \begin{bmatrix}
 -1 & -1 \\ 0 & 0 \\ 5 & -1 \\ -1 & 5
 \end{bmatrix}
 ,
\]
are value and derivative of $\Phi$ at the origin.
The lower $(2\times 2)$-submatrix of $D\Phi(0,0)$ has determinant $2/3$.
Hence, by the implicit function theorem, there exists a neighborhood
$U$ of the origin and a smooth function $F : U \to \R^2$ such that 
$[F(\beta_J),\beta_J]^T$ defines a point on the trace of $\Phi$ for 
all $\beta_J \in U$, corresponding to the clothoid solving 
\eqref{eq:twopointG1norm}. Value and derivative of $F$ at the origin 
are given by 
\[
 F(0,0) = 
 \begin{bmatrix}
 0 \\ 1 
 \end{bmatrix}
 ,\quad 
 DF(0,0) = 
 \begin{bmatrix}
  -1 & -1 \\ 0 & 0
 \end{bmatrix}
 \cdot
 \begin{bmatrix}
 5 & -1 \\ -1 & 5
 \end{bmatrix}^{-1}
 = 
 \begin{bmatrix}
 -1/4 & -1/4 \\ 0 & 0 
 \end{bmatrix}
  .
\]
One might suspect that employing functions $\alpha$ of the 
general form 
$\alpha = \alpha_0 \ell^2_0 + \alpha_{1/2} \ell^2_{1/2} + \alpha_1 \ell^2_1$ 
would define even more solutions of \eqref{eq:twopointG1norm}.
To see that this is not true, let
$\tilde \alpha := \alpha - \alpha_{1/2} = 
(\alpha_0-\alpha_{1/2})\ell^2_0 + (\alpha_1-\alpha_{1/2}) \ell^2_1$ be 
an angle function as considered above.
Then the corresponding clothoids $p := I(\alpha,\cdot)$ and 
$\tilde p := I(\tilde\alpha,\cdot)$ are related by
$p = \exp(i\alpha_{1/2}) \tilde p$ so that their normal forms
coincide, 
\[
 q = \frac{p}{p(1)} = 
 \frac{\exp(i\alpha_{1/2}) \tilde p}{\exp(i\alpha_{1/2}) \tilde p(1)} = 
 \frac{\tilde p}{\tilde p(1)} = \tilde q.
\]
\end{proof}
Let $p_J,\alpha_J$ be boundary data for the general problem 
\eqref{eq:twopointG1} with 
$d = p_1-p_0 \neq 0$.
If $q$ is the solution 
of \eqref{eq:twopointG1norm} for data $\beta_J := \alpha_J - \arg d$
according to the preceding theorem, then $p := p_0 + d q$
solves \eqref{eq:twopointG1}.
 With $q = w I(\beta,\cdot)$, an equivalent 
expression is
\begin{equation}
\label{eq:p2}
 p = p_0 + \frac{d}{I(\beta)} I(\beta,\cdot)
 ,
\end{equation}
which is independent of the velocity $w$. 
Hence, it suffices
to know the first coordinate function 
\[
 f : U \to \R
 ,\quad 
 f(\beta_0,\beta_1) = \beta_{1/2}
  ,
\]
of $F$ to construct the crucial tangent angle
$\beta :=\beta_0 \ell^2_0 + f(\beta_0,\beta_1) \ell^2_{1/2} + \beta_1 \ell^2_1$.

The function $f$ has the symmetry properties
\begin{equation}
\label{eq:symf}
 f(\beta_0,\beta_1) = f(\beta_1,\beta_0)
 ,\quad 
 f(-\beta_0,-\beta_1) = -f(\beta_0,\beta_1)
 .
\end{equation}
Hence, the second order derivatives vanish at the origin and we obtain 
the expansion
\[
 f(\beta_0,\beta_1) = 
 -\frac{\beta_0+\beta_1}{4} + O(\|\beta_J\|^3)
 .
\]


\section{Approximate solution}
\label{sec:ApproxSol}

Computing accurate solutions of the nonlinear problem \eqref{eq:twopointG1norm}  
is possible, but the determination of 
$\beta_{1/2} = f(\beta_0,\beta_1)$ requests more or less 
elaborate and/or computationally expensive numerical methods. 
Instead, we propose a good approximation
$\tilde f$ for later 
use with subdivision algorithms.
More precisely, we seek a function
$\tilde f$ with the following properties:
\begin{itemize}
\item[{\em i)}]
$\tilde f : \R^2 \to \R$ is a cubic polynomial. This choice combines modest 
complexity with sufficient flexibility to achieve good global approximation.
\item[{\em ii)}]
$\tilde f(0,0) = f(0,0)$ and $D\tilde f(0,0) = Df(0,0)$. Thus, $\tilde f$ 
approximates $f$ very good for small data. 
\item[{\em iii)}]
$\tilde f$ inherits the symmetry properties \eqref{eq:symf} from $f$,
\[
 \tilde f(\beta_0,\beta_1) = \tilde f(\beta_1,\beta_0)
 ,\quad 
 \tilde f(-\beta_0,-\beta_1) = -\tilde f(\beta_0,\beta_1)
 .
\]
\item[{\em iv)}]
For a wide range of boundary data, say 
$\beta_J \in B := [-\pi/2,\pi/2]^2$, the {\em angle defect} 
\[
  \delta (\beta_J) := \arg I(\beta)
  ,\quad 
 \beta := 
 \beta_0 \ell^2_0 + \tilde f(\beta_0,\beta_1) \ell^2_{1/2} + \beta_1 \ell^2_1
\]
is small in modulus. 
\end{itemize}
Before presenting our solution, let us explain the meaning of the angle defect
as a measure for the quality of the approximation $\tilde f$.
\begin{theorem}
Given Hermite data $p_J,\alpha_J$ with $d = p_1-p_0\neq 0$, let 
$\beta_J := \alpha_J - \arg d$ and $\beta := \beta_0 \ell^2_0 + \tilde 
f(\beta_0,\beta_1) \ell^2_{1/2} + \beta_1 \ell^2_1$, as above. 
Then, analogous to \eqref{eq:p2}, 
\[
 p := p_0 + \frac{d}{I(\beta)} I(\beta,\cdot)
\]
defines a clothoid interpolating the point data, i.e.,
$p(0) = p_0, p(1) = p_1$. At the boundaries, its tangent angle $\alpha$ differs
from the prescribed values by the angle defect,
\[
 \alpha(0) = \alpha_0 - \delta(\beta_J)
 ,\quad
 \alpha(1) = \alpha_1 - \delta(\beta_J)
 .
\]
\end{theorem}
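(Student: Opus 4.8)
The plan is to verify the two claimed properties directly from the definitions, using the similarity relations \eqref{eq:sim} that were established in Section~\ref{sec:TwoPInterpol}. The statement splits naturally into two assertions: first, that $p$ interpolates the point data $p_0,p_1$; second, that the boundary tangent angles of $p$ deviate from the prescribed $\alpha_j$ by exactly the angle defect $\delta(\beta_J)$. I would treat the interpolation of the point data first, as it is essentially immediate.

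For the point interpolation, I would simply evaluate the defining expression $p = p_0 + \frac{d}{I(\beta)} I(\beta,\cdot)$ at the endpoints. At $t=0$ we have $I(\beta,0) = \int_0^0 \exp(i\beta(s))\,ds = 0$, so $p(0) = p_0$. At $t=1$ we have $I(\beta,1) = I(\beta)$ by the definition of the abbreviation $I(\beta) := I(\beta,1)$, hence $p(1) = p_0 + \frac{d}{I(\beta)} I(\beta) = p_0 + d = p_1$, using $d = p_1 - p_0$. Note that this argument does not require $I(\beta)$ to be real or positive; it works for any nonzero value, which is precisely why the approximate angle $\beta$ (built from $\tilde f$ rather than the exact $f$) still yields a genuine interpolant of the points. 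This is the structural reason the construction \eqref{eq:p2} is robust under approximation.

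For the tangent angles, I would compute the argument of $p'$ at the boundary. Since $p = p_0 + \frac{d}{I(\beta)} I(\beta,\cdot)$ and $\tfrac{d}{dt} I(\beta,t) = \exp(i\beta(t))$, the tangent is $p'(t) = \frac{d}{I(\beta)} \exp(i\beta(t))$. Taking the tangent angle $\alpha = \arg p'$ gives $\alpha(t) = \arg d - \arg I(\beta) + \beta(t) = \varphi + \beta(t) - \delta(\beta_J)$, where $\varphi = \arg d$ and $\delta(\beta_J) = \arg I(\beta)$ by the definition in property \emph{iv)}. Evaluating at $t=j$ for $j \in J$ and using $\beta(j) = \beta_j = \alpha_j - \varphi$ (the defining relation $\beta_J := \alpha_J - \arg d$), the angle $\varphi$ cancels and I obtain $\alpha(j) = \alpha_j - \delta(\beta_J)$, as claimed.

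The only point requiring care is the treatment of $\arg$ as an additive (rather than merely mod-$2\pi$) quantity: the identity $\arg\bigl(\tfrac{d}{I(\beta)}\exp(i\beta)\bigr) = \arg d - \arg I(\beta) + \beta$ holds as an equality of \emph{unrolled} tangent angles, consistent with the convention stated in Section~\ref{sec:TwoPInterpol} that $\alpha$ is unrolled to avoid jumps. I would note this explicitly, since it is the mechanism by which the single scalar $\delta(\beta_J)$ accounts for the deviation at \emph{both} endpoints simultaneously: the defect enters as a constant shift of the entire tangent-angle function $\alpha$, not as two independent errors. Apart from this bookkeeping, the proof is a direct substitution, and I do not anticipate any genuine obstacle.
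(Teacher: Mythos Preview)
Your proof is correct and follows essentially the same route as the paper: it dismisses the point interpolation as trivial and then computes $\alpha(j) = \arg d + \beta(j) - \arg I(\beta) = \arg d + \beta_j - \delta(\beta_J) = \alpha_j - \delta(\beta_J)$ directly. Your additional remarks on the unrolling convention and on why only one scalar defect appears are accurate elaborations but not structurally different from the paper's argument.
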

\begin{proof}
Interpolation of the point data is trivial. For the tangent angle, we obtain
\[
 \alpha(j) = \arg d  + \beta(j) - \arg I(\beta)
 = \arg d + \beta_j - \delta(\beta_J) = \alpha_j - \delta(\beta_J)
 ,\quad 
 j \in J
 .
\]
\end{proof}
To construct a suitable function $\tilde f$, we adopt the idea used in the 
proof of Theorem~\ref{thm:1}. For a large collection of angles 
$\alpha_J^i, i\in I$, we compute points 
$\Phi(\alpha^i_0,\alpha^i_1) =[\beta^i_{1/2},w^i,\beta^i_0,\beta^i_1]^T$ 
representing clothoids in normal position. Points for  which one of the angles 
$\beta^i_0,\beta^i_{1/2},\beta^i_1$ lies outside the interval 
$B$ are discarded as they correspond to clothoids with too large tangent 
angles, which are of little relevance for most applications, e.g., for design purposes. Denoting the index 
set of the 
remaining points by $\tilde I$, the task is to determine $\tilde f$ such that 
$f(\beta^i_0,\beta^i_1) \approx \beta^i_{1/2}$ for all $i \in \tilde I$. The
ansatz for the cubic polynomial $\tilde f$ satisfying {\em ii)} with 
symmetry properties according to {\em iii)} is 
\[
 f(\beta_0,\beta_1) = (\beta_0+\beta_1) 
 \bigl(f_1 (\beta_0^2+\beta_1^2) + f_2 \beta_0 \beta_1 - 1/4\bigr)
 .
\]
Now, we can determine the unknown parameters $f_0,f_1$ by 
standard approximation methods. The following 
choice was found when striving for a good compromise between the maximal angle
defect $\|\delta(\beta_J)\|_\infty$ and 
simplicity of the coefficients.
\begin{theorem}
\label{thm:2}
Let 
\begin{equation}
\label{eq:tildef}
 \tilde f(\beta_0,\beta_1) := (\beta_0+\beta_1)\left(
 \frac{\beta_0^2+\beta_1^2}{68} - \frac{\beta_0 \beta_1}{46} - \frac{1}{4}
 \right)
 .
\end{equation} 
There exist constants $c_1,c_2$ such that 
the angle defect is bounded by
\[
  |\delta(\beta_0,\beta_1)| \le 
  \min\{c_1, c_2 |\beta_0+\beta_1|\cdot\|\beta_J\|^2\}
\]
for all $\beta_J \in B := [-\pi/2,\pi/2]^2$.
A feasible numerical value for both constants is $c_1 = c_2 = 1/800$, which is 
less than a $1/800$.
\end{theorem}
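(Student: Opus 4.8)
The plan is to reduce the estimate on the angle defect $\delta(\beta_J) = \arg I(\beta)$ to the control of a single real integral. Writing $I(\beta) = C + iS$ with $C := \int_0^1 \cos\beta(s)\,ds$ and $S := \Im I(\beta) = \int_0^1 \sin\beta(s)\,ds$, I would first verify that the quadratic $\beta$ built from $\beta_J \in B$ and $\beta_{1/2} = \tilde f(\beta_J)$ stays small enough that $C$ is bounded below by a positive constant on $B$. Then $\delta = \arctan(S/C)$ is well defined, and from $|\arctan x| \le |x|$ we obtain the clean reduction $|\delta| \le |S|/C$. Since $\tilde f$ was designed precisely to force the interpolant into (approximate) normal position, i.e.\ to make $I(\beta)$ nearly real, the whole problem becomes the quantitative control of $S$.

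Next I would expose the structure of $S$ through the symmetries already recorded for $f$ and inherited by $\tilde f$. Replacing $\beta_J$ by $-\beta_J$ sends $\beta$ to $-\beta$ (antisymmetry of $\tilde f$), so $S(-\beta_J) = -S(\beta_J)$; swapping $\beta_0,\beta_1$ replaces $\beta(s)$ by $\beta(1-s)$ and leaves $S$ invariant. On the line $\beta_0 + \beta_1 = 0$ one has $\tilde f = 0$, hence $\beta(s) = \beta_0(1-2s)$ is antisymmetric about $s = 1/2$ and $S$ vanishes identically. By Hadamard's lemma this yields a smooth factorization $S(\beta_J) = (\beta_0+\beta_1)\,G(\beta_J)$, and oddness of $S$ forces $G$ to be even. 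Finally, $S = \int_0^1\beta\,ds + O(\|\beta_J\|^3)$ with the exact value $\int_0^1\beta\,ds = \tfrac16(\beta_0+\beta_1) + \tfrac23\beta_{1/2}$, together with property \emph{ii)}, i.e.\ $\beta_{1/2} = -\tfrac14(\beta_0+\beta_1) + O(\|\beta_J\|^3)$, makes the linear part of $S$ cancel, so $G(0)=0$ and $G = O(\|\beta_J\|^2)$. This already produces both advertised shapes: the uniform bound $|\delta|\le c_1$ from compactness of $B$, and the local bound $|\delta| \le c_2\,|\beta_0+\beta_1|\,\|\beta_J\|^2$ from $|\delta| \le |S|/C = |\beta_0+\beta_1|\,|G|/C$.

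The hard part will be certifying the explicit value $c_1 = c_2 = 1/800$, which the symmetry and order-of-vanishing arguments do not supply: they only guarantee finiteness of the two suprema $\sup_B|\delta|$ and $\sup_B |G|/\|\beta_J\|^2$. I would pin these down by a rigorous enclosure of the explicit smooth functions $S$, $G$, and $C$ on the square $B$: near the origin a Taylor expansion of $\sin\beta$ with explicit integral remainder controls $S$ (and hence $G$), while away from the origin a grid evaluation equipped with gradient (Lipschitz) bounds certifies that the maximum stays below $1/800$. Thus the conceptual work is the reduction and factorization above, and the numerical value of the constants is fixed by this final computer-assisted estimate rather than by any closed-form identity, consistent with the ad hoc coefficients $1/68$ and $1/46$ in the definition of $\tilde f$.
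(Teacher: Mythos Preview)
Your argument is correct and takes a genuinely different route from the paper. The paper never analyzes $S=\Im I(\beta)$ directly; instead it compares $\tilde f$ with the \emph{exact} function $f$ supplied by Theorem~\ref{thm:1}. From properties \emph{ii)}, \emph{iii)} and the symmetries \eqref{eq:symf} one gets $|f(\beta_J)-\tilde f(\beta_J)|\le \tilde c_2\,|\beta_0+\beta_1|\,\|\beta_J\|^2$, and this is transferred to $\delta$ by observing that $\Delta(\beta_J,\beta_{1/2}):=|\arg I(\beta)|$ is Lipschitz in $\beta_{1/2}$ (Lemma~\ref{lem:embedding} guarantees $I(\beta)\neq 0$), together with $\Delta(\beta_J,f(\beta_J))=0$. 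Your approach is more self-contained: it avoids the implicit-function construction of $f$ entirely and extracts the factor $(\beta_0+\beta_1)\|\beta_J\|^2$ from the explicit integral via Hadamard factorization and the cancellation of the linear part of $S$. The price is that you must check $C=\operatorname{Re} I(\beta)>0$ on $B$ directly (the paper gets $I(\beta)\neq 0$ from the Tait--Kneser argument), whereas the paper's price is its dependence on the earlier existence theorem for $f$. Both arguments leave the numerical constant $1/800$ to a grid/enclosure verification, exactly as you describe.
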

\begin{proof}
Boundedness of the angle defect by some constant $c_1$ follows immediately from 
continuity
over the compact domain $B$. Concerning existence of a bound in terms of 
$|\beta_0+\beta_1| \cdot \|\beta_J\|^2$, 
we note that properties {\em ii)} and {\em iii)} together with 
\eqref{eq:symf} imply that 
$f(\beta_J) - \tilde f(\beta_J) = O(\|\beta\|^3)$ and 
$f(\beta_J) = \tilde f(\beta_J) = 0$ for $\beta_0+\beta_1=0$.
Thus, there exists a constant $\tilde c_2$ such that 
\[
 |f(\beta_J) - \tilde f(\beta_J)| \le \tilde c_2 
|\beta_0+\beta_1|(\beta_0^2+\beta_1^2)
 ,\quad 
 \beta_J \in U
 .
\]
It remains to show that this qualitative behavior is inherited by $\delta$.
To this end, we define the function 
\[
 \Delta(\beta_J,\beta_{1/2}) := \bigl |\arg I(\beta)\bigr|
 ,\quad 
 \beta_J \in B
 ,\
 \beta_{1/2} \in f(B) \cup \tilde f(B)
 ,
\]
with $\beta = \beta_0 \ell^2_0 + \beta_1 \ell^2_1 + \beta_{1/2} \ell^{2}_{1/2}$.
Since $|\beta_1-\beta_0| < 2\pi$ for $\beta_J \in B$, 
Lemma~\ref{lem:embedding} implies that the integral is nonzero so that 
$\Delta$ is a smooth function over a compact domain and hence Lipschitz
with some constant $L$. We obtain 
\begin{align*}
 \delta(\beta_J) &= \Delta(\beta_J,\tilde f(\beta_J))
 = 
 \Delta(\beta_J,\tilde f(\beta_J)) - \Delta(\beta_J,f(\beta_J))\\
 &\le L |\tilde f(\beta_J) - f(\beta_J)|
 \le \tilde c_2 L|\beta_0+\beta_1|(\beta_0^2+\beta_1^2)
\end{align*}
for $\beta_J \in U$. For $\beta_J \not\in U$, continuity of $\delta$ 
shows that the inequality remains valid with a possibly enlarged constant
$c_2$.
The given numerical value for $c_1$ and $c_2$ can be verified by evaluation
over a fine grid on $B$, see Figure~\ref{figure2}.
\end{proof}
\begin{figure}
 \centering
 \includegraphics[height=5.5cm]{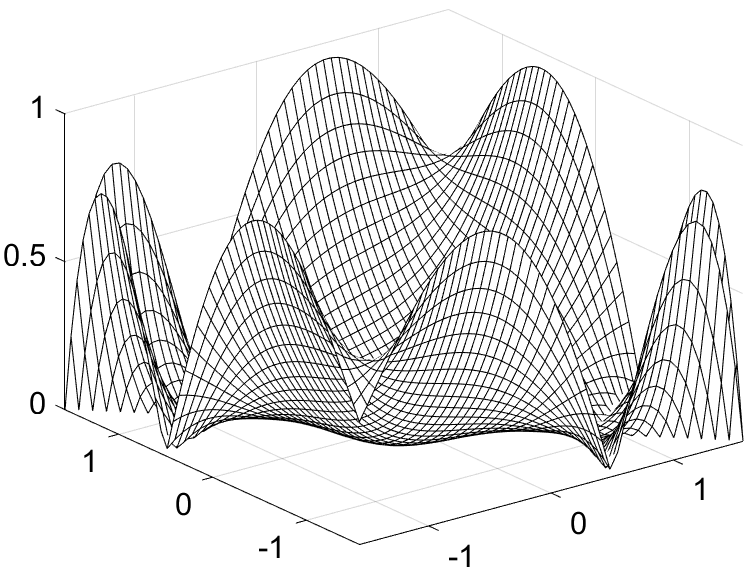}
 \qquad 
 \includegraphics[height=5.5cm]{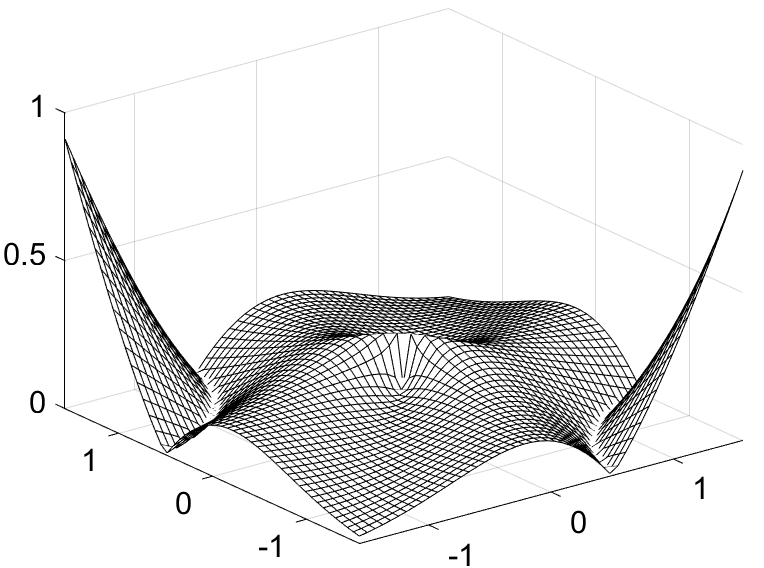}
 \caption{Plots of $800\cdot|\delta(\beta_J)|$ {\em (left)} and
 $800\cdot|\delta(\beta_J)|/(|\beta_0+\beta_1|(\beta_0^2+\beta_1^2))$ 
 {\em (right)}
 for $-\pi/2 \le \beta_0,\beta_1 \le \pi/2$.
 The upper bound $1$ in both cases indicates that $c_1=c_2=1/800$ is a 
 feasible value for 
 the constants in Theorem~\ref{thm:2}.}
 \label{figure2}
\end{figure}
In many applications, an error of less than a tenth of a degree for the 
interpolation of tangent angles will be 
acceptable so that the given approximation can be used directly.
Moreover, the theorem shows that the approximation is exact for the 
symmetric case 
$f(\beta_0,-\beta_0) = \tilde f(\beta_0,-\beta_0) = 0$, whose solution is a 
segment of a circle or a straight line.

If, in the general case, higher accuracy is required, one step of the Newton 
iteration

\[
\beta_{1/2}
\quad {\mathrel{\reflectbox{\ensuremath{\longmapsto}}}}\quad
\beta_{1/2} - \arg I(\beta) /
\operatorname{Re} \left(
\frac{\int_0^1 \ell^2_{1/2}(t) \exp(i\beta(t))\,dt}{I(\beta)}
\right)
\]

reduces the maximal angle defect to less than $5\!\times\!10^{-8}$, and a 
second step to less than $5\!\times\!10^{-16}$.


\section{Hermite subdivision by the clothoid average}
\label{sec:SubDivCA}

If a whole sequence of Hermite data points is to be interpolated, 
one might join always two of them by a clothoid and connect the
segments to form a single composite curve. If the clothoids are 
determined exactly, their contact is $G^1$, meaning that points and
tangent directions of neighboring clothoids coincide at the junctions.
If the clothoid is only approximated, as described in the preceding section, 
the contact is still continuous, but not $G^1$ in a strict sense.
In any case, curvature is discontinuous, what may be insufficient for 
many design applications. Below, we propose different subdivision strategies
to generate a (visually) smooth curve from a sequence
$p_j^0$ of points in $\C$ and corresponding tangent angles 
$\alpha_j^0, j \in\Z$. As building 
block we define the (approximate) clothoid average as follows:
\begin{definition}
Given a pair of {\em Hermite couples} $h_j := (p_j,\alpha_j), j \in J$,
with $d:= p_1-p_0 \neq 0$,
let $p \in \calK_1$ be the clothoid with endpoints $\tilde p(j) = p_j$ 
and tangent angles $\alpha(j) \approx \alpha_j$ constructed 
by the approximation $\tilde F$ of $F$ according to Theorem~\ref{thm:2}. Then 
the approximate {\em clothoid average} of $h_0$ and $h_1$ at $t \in \R$ is 
defined by evaluation of $p$ and the corresponding angle function $\alpha$ at 
$t$, and  written as
\[
 t h_0 \oplus (1-t) h_1 := \bigl(p(t), \alpha(t)\bigr)
 .
\]
\end{definition}
Sequences of Hermite couples are denoted by $H := (h_j)_{j \in \Z}$.
Now, we generate sequences $H^0,H^1,H^2,\dots$ from given initial 
data $H^0$ by means of a binary subdivision operator $S : H^\ell \to 
H^{\ell+1}$, the rules of which are based on the clothoid average.

The simplest subdivision operator of that type is inserting a new 
Hermite couple always between two given ones,
\[
 S_1 := H \mapsto H'
 ,\quad 
 h'_{2j} = h_j
 ,\quad 
 h'_{2j+1} = \frac{1}{2} h_j \oplus \frac{1}{2} h_{j+1}
 .
\]

Formally, $S_1$ corresponds to the Lane-Riesenfeld algorithm of degree $1$.
Defining the {\em averaging operator}
\[
 A := H \mapsto H'
 ,\quad 
 h'_{j} =  \frac{1}{2} h_j \oplus \frac{1}{2} h_{j+1}
 ,
\]
we can proceed in that direction and define 
Lane-Riesenfeld-type algorithms of degree $n$ by 
applying $(n-1)$ rounds of averaging to the output of $S_1$,
\[
 S_n := A^{n-1} S_1
 ,\quad 
 n \in \N
 .
\]
For instance, the Chaikin-type algorithm, obtained for $n=2$, explicitly reads
\[
 S_2 := H \mapsto H'
 ,\quad 
 h'_{2j} = \frac{1}{2}h_j \oplus 
 \frac{1}{2} \Bigl(\frac{1}{2} h_j \oplus \frac{1}{2} h_{j+1}\Bigr)
 ,\quad 
 h'_{2j+1} = 
 \frac{1}{2} \Bigl(\frac{1}{2} h_j \oplus \frac{1}{2} h_{j+1}\Bigr) \oplus
 \frac{1}{2}h_{j+1}
 .
\]
Needless to say that this symbolic representation of a nonlinear process
cannot be simplified through commutativity, associativity, or distributivity.

As a further example, we define a family of interpolatory four-point schemes
$S^4_\omega$ with tension parameter $\omega < 0$,
\[
 S^4_\omega := H \mapsto H'
 ,\quad 
 h'_{2j} := h_j
 ,\quad
 h'_{2j+1} := 
 \frac{1}{2} 
 \bigl(\omega h_{j-1} \oplus (1-\omega) h_j\bigr) +
 \frac{1}{2}
 \bigl((1-\omega) h_{j+1} \oplus \omega h_{j+2}\bigr)
 .
\]
Of course, many other subdivision schemes, including schemes of arbitrary
arity, can be constructed in this spirit. 

Let $H^\ell = S^\ell H^0 = (h^\ell_j)_{j \in \Z}$ be the sequence of Hermite 
couples $h^\ell_j = (p^\ell_j,\alpha^\ell_j)$ at level $\ell \in \N_0$.
A common feature of the algorithms presented above is the reproduction of
circles. That is, if there exists a midpoint $m \in \C$ and a radius $r>0$
such that 
$p^\ell_j = m - i r \exp(i \alpha^\ell_j)$ for all $j$, then
$p^{\ell+1}_j = m - i r \exp(i \alpha^{\ell+1}_j)$ for all $j$.
Further, clothoids are almost reproduced. That is, if $p \in \calK_1$
is a clothoid with angle function $\alpha$, and if 
\[
 p^\ell_j = p(t^\ell_j)
 ,\quad
 \alpha^\ell_j = \alpha(t^\ell_j)
 ,\quad 
 j \in \Z
 ,
\]
for certain parameters $t^\ell_j$, then there exist parameters
$t^{\ell+1}_j$ such that 
\[
 p^{\ell+1}_j \approx p(t^{\ell+1}_j)
 ,\quad
 \alpha^{\ell+1}_j \approx \alpha(t^{\ell+1}_j)
 ,\quad 
 j \in \Z
 .
\]
The quality of the approximation is determined by the 
magnitude of the angle defect according to the preceding section.

Concerning convergence, the examples in the next section suggest that all 
algorithms presented here are $G^1$-convergent in the following 
sense: There exists a curve $p$ in $\C$ with tangent angle $\alpha$ which, 
respectively, are the limits of points and angles generated by the algorithm.
More precisely, 
if $j(\ell)$ is a sequence of integers such that 
$t = \lim_{\ell \to \infty} 2^{-\ell} j(\ell)$, then 
\[
 \lim_{\ell \to \infty} p^\ell_{j(\ell)} = p(t)
 \quad \text{and} \quad
 \lim_{\ell \to \infty} \alpha^\ell_{j(\ell)} = \arg p'(t) = \alpha(t)
 .
\]
This is analogous to standard Hermite subdivision, where the 
slope of the limit must coincide with the limit of slopes,
and that is why we suggest to call the procedures introduced here
{\em Geometric Hermite subdivision}. 
A proof of $G^1$-convergence of the Lane-Riesenfeld-type algorithm $S_1$ of 
degree $1$ is given in Section~\ref{sec:Proof};
a more general theory is currently developed and beyond the scope of this 
paper.

\section{Numerical Experiments}
\label{sec:Examples}
In this section, we present numerical examples illustrating the 
shape properties of the Hermite subdivision algorithms
$S_n$ and $S^4_\omega$ as introduced in the preceding section.

Throughout, we use the same set of initial data $H^0$. To avoid a special 
treatment of boundaries, it is assumed to be periodic, 
$h^0_j = h^0_{j+8}, j \in \Z$. 
All figures are structured as follows:
On the left hand side, we see the initial points $p^0_j$ and 
normals $n^0_j := i \exp(i \alpha_j^0)$ together with the polygon 
$p^\ell_j$ as obtained after $\ell=8$ rounds of subdivision.
The middle figure shows the points $p^\ell_j$ for $\ell=5$
together with the corresponding normals $n^\ell_j := i \exp(i \alpha_j^\ell)$. 
On the right hand side, estimated curvature values 
$\kappa^\ell_j, \ell = 8,$ are plotted versus the normalized chord length 
\[
 s_j^\ell := \sigma \sum_{i=0}^{j-1} 
\bigl|p^\ell_{i+1}-p^\ell_i\bigr|
 ,
\]
where $\sigma$ is chosen such that the length of one loop equals $1$.
Specifically, the curvature value $\kappa^\ell_j$ is computed as the 
reciprocal radius 
of the circle interpolating the points $p^\ell_{j-1},p^\ell_j,p^\ell_{j+1}$.
The Fresnel-integrals $I(\alpha,\cdot)$ appearing in the 
definition of the clothoid average are computed using Gauss-Legendre 
quadrature with three nodes.

Figure~\ref{fig:gallery_1} shows the Lane-Riesenfeld-type algorithm $S_1$.
By construction, it is interpolatory. The plots suggests that the limit 
is $G^1$, i.e., free of kinks, and that the tangent angle of the limit equals 
the limit of tangent angles. The same observation is true for all 
subsequent cases. Curvature looks piecewise linear, as it would be the case 
when connecting always two consecutive points by a clothoid. In fact, the 
pieces are not exact clothoids due to the angle defect, but the deviation 
is very small. The uneven distribution of spikes in the middle figure indicates
that the standard parametrization
\[
 t^\ell_j = j 2^{-\ell} \mapsto p^\ell_j
\]
does not converge to a differentiable limit. 

Figure~\ref{fig:gallery_2} shows the Lane-Riesenfeld-type algorithm $S_2$.
The scheme is no longer interpolatory, even though the initial 
data points $p^0_j$ are very close to the generated curve. The same is true 
for all schemes $S_n, n \ge 2$.
Curvature looks continuous, but it still has certain imperfections.
Thanks to the averaging step, the standard parametrization is now smoothed out,
and we conjecture that it is $C^1$.

Figure~\ref{fig:gallery_3} shows the Lane-Riesenfeld-type algorithm $S_3$.
Now, the curvature distribution is free of artifacts so that the generated 
curve can be rated {\em Class A} according to the conventions of the automotive 
industry. However, there are certain spots where curvature seems to be not 
differentiable with respect to arc length. That is, the limit is $G^2$, but 
not $G^3$. The same seems to be true also for Lane-Riesenfeld variants of even 
higher degree.

Figure~\ref{fig:gallery_4} shows the four-point scheme with weight $\omega = 
-1/18$. It is interpolatory and seems to generate a $G^2$-limit. 
Extended experiments show that the value $\omega =-1/18$ yields visually 
fairest curves. For comparison, Figure~\ref{fig:gallery_5} shows the much less 
satisfactory result for $\omega = -1/9$.

\def\h{4.5cm}
\begin{figure}
\centering
\includegraphics[height=\h]{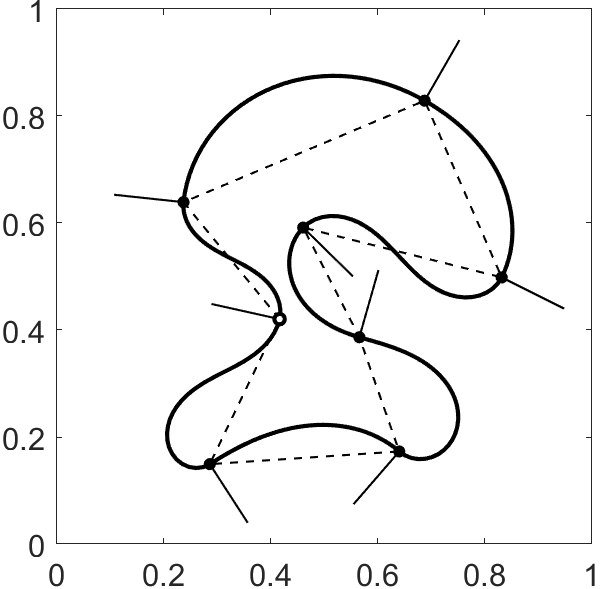} 
\quad 
\includegraphics[height=\h]{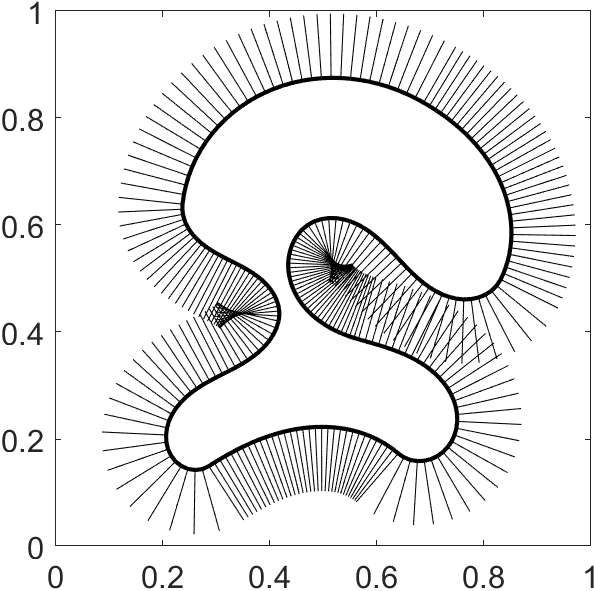}
\quad
\includegraphics[height=\h]{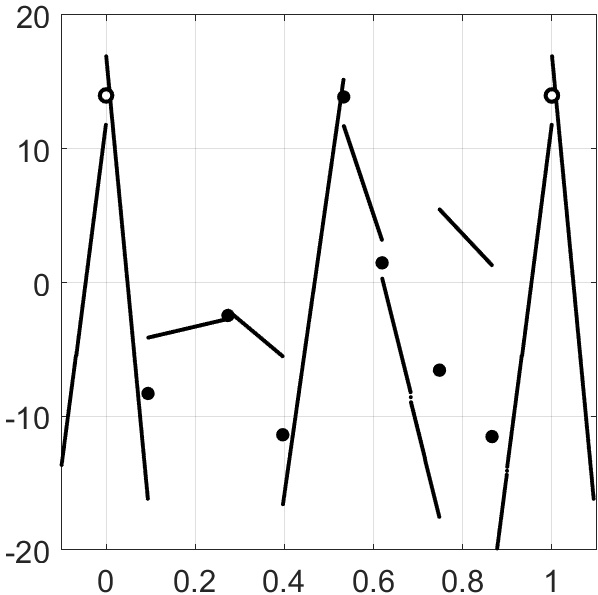}
\caption{Lane-Riesenfeld-type subdivision of degree $n=1$.}
\label{fig:gallery_1}
\end{figure}

\begin{figure}
\centering
\includegraphics[height=\h]{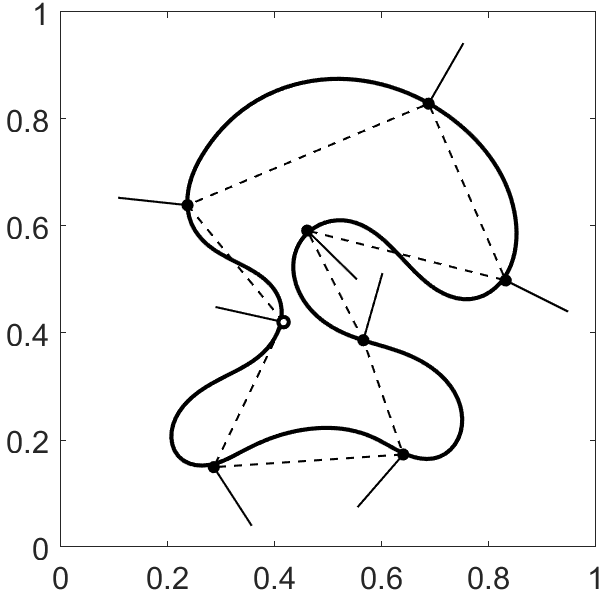} 
\quad 
\includegraphics[height=\h]{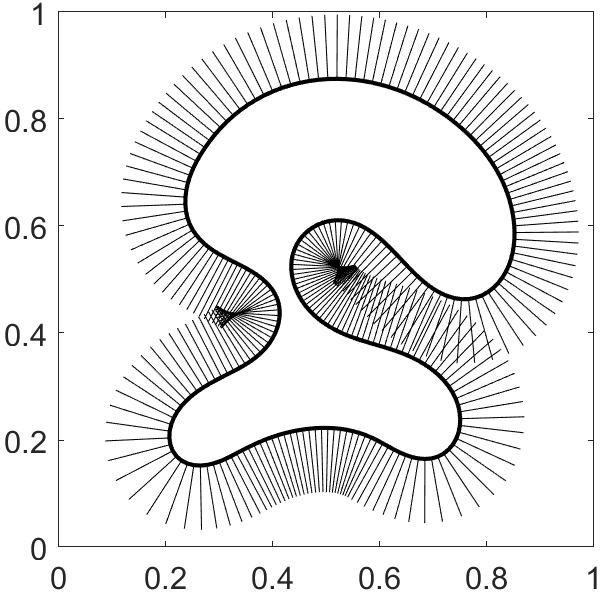}
\quad
\includegraphics[height=\h]{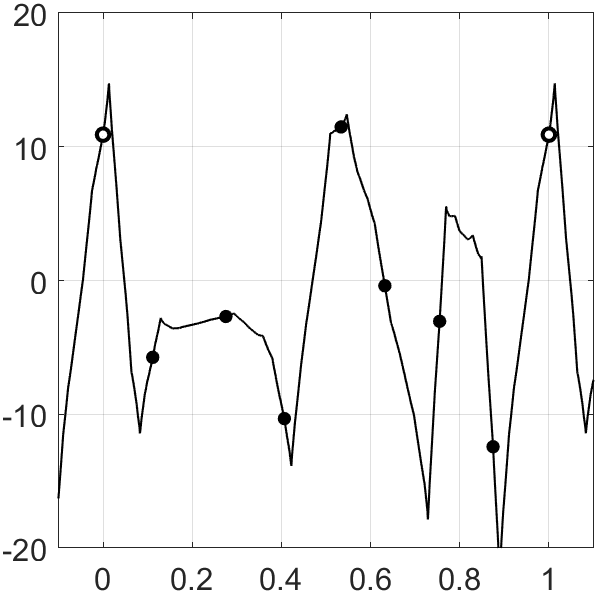}
\caption{Lane-Riesenfeld-type subdivision of degree $n=2$.}
\label{fig:gallery_2}
\end{figure}

\begin{figure}
\centering
\includegraphics[height=\h]{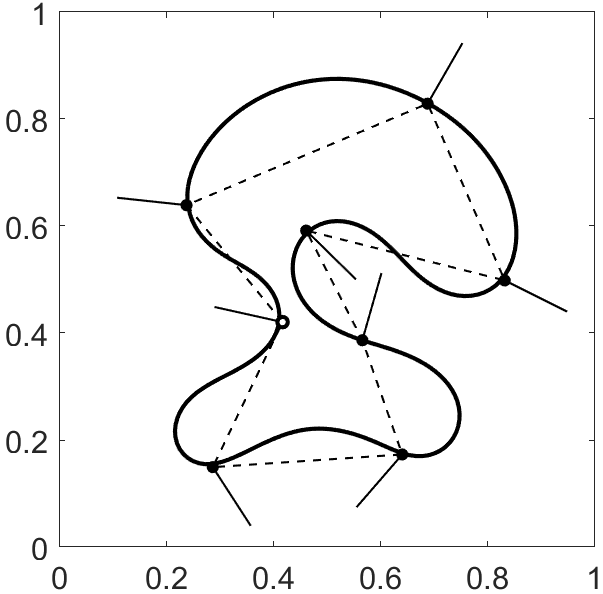} 
\quad 
\includegraphics[height=\h]{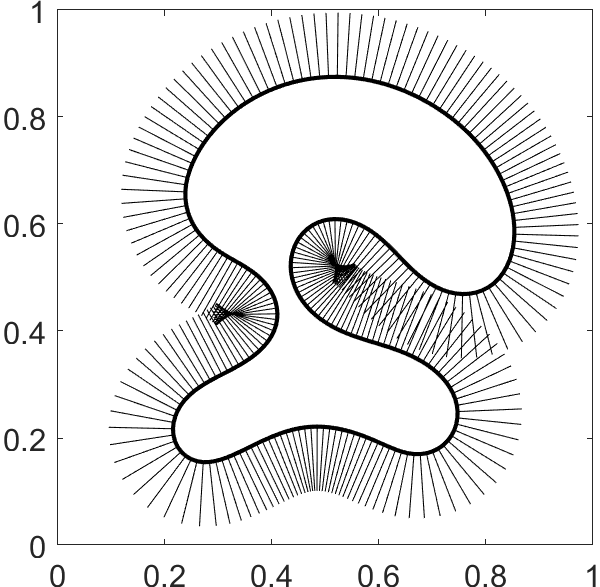}
\quad
\includegraphics[height=\h]{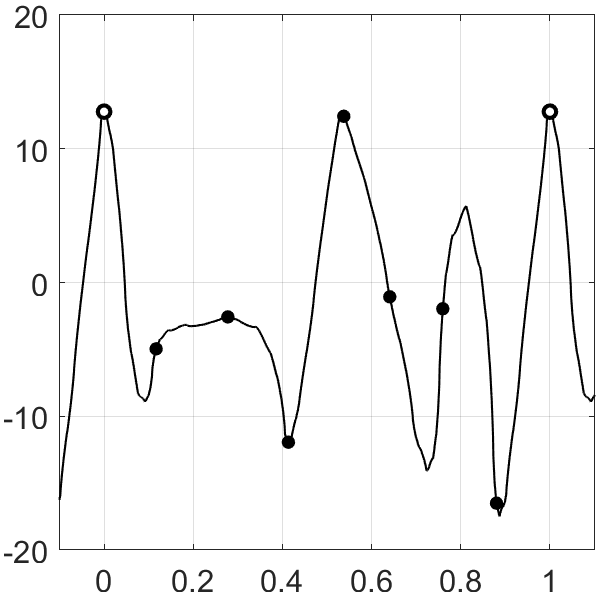}
\caption{Lane-Riesenfeld-type subdivision of degree $n=3$.}
\label{fig:gallery_3}
\end{figure}

\begin{figure}
\centering
\includegraphics[height=\h]{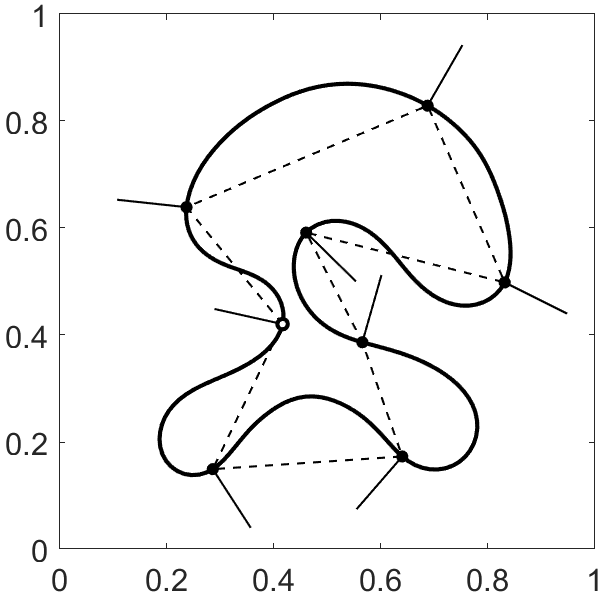} 
\quad 
\includegraphics[height=\h]{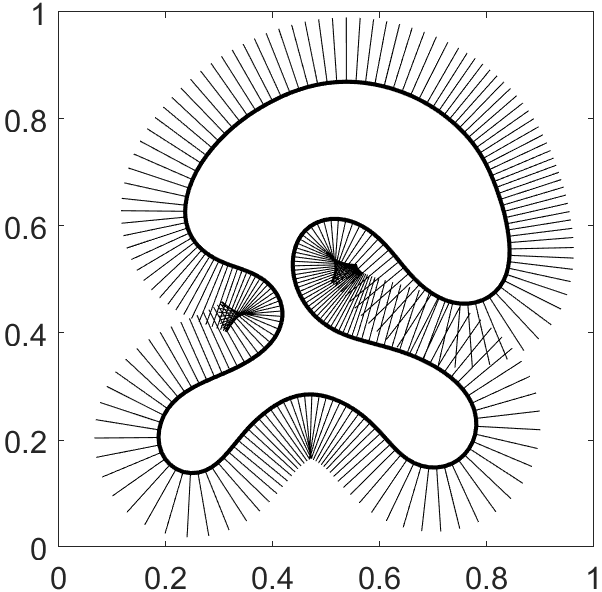}
\quad
\includegraphics[height=\h]{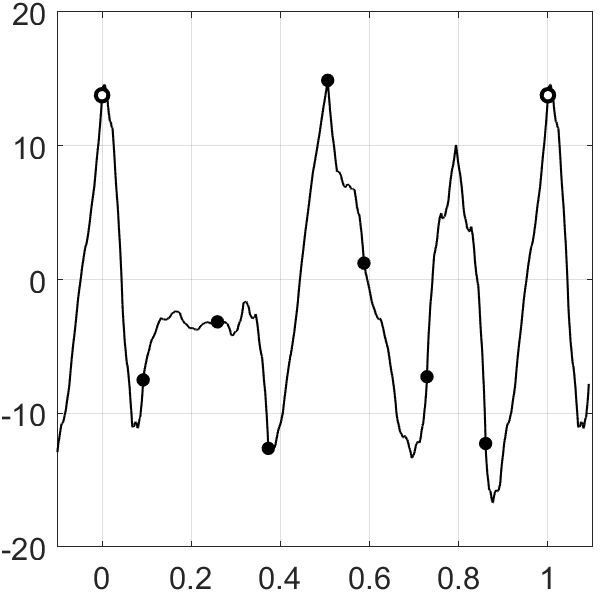}
\caption{Four-point subdivision with weight $\omega=-1/18$.}
\label{fig:gallery_4}
\end{figure}

\begin{figure}
\centering
\includegraphics[height=\h]{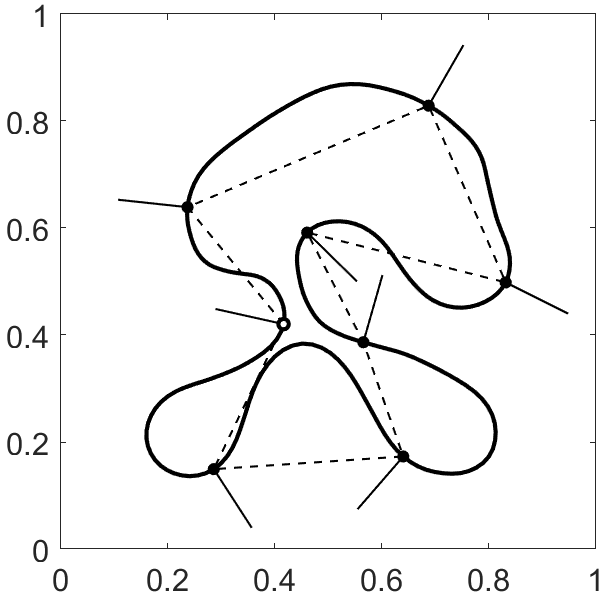} 
\quad 
\includegraphics[height=\h]{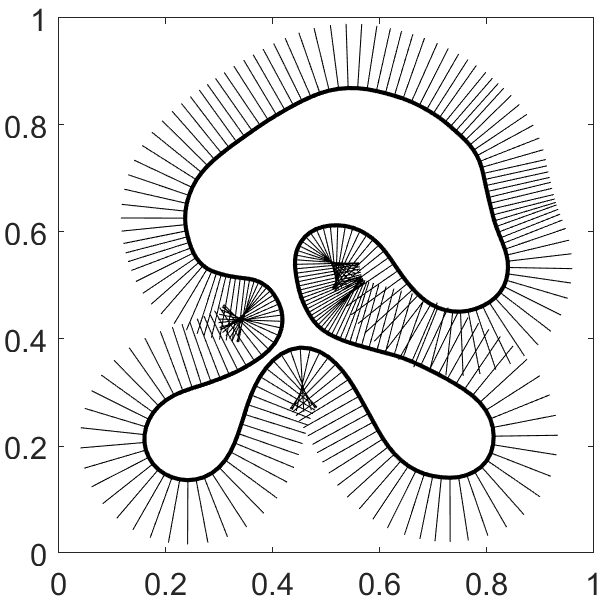}
\quad
\includegraphics[height=\h]{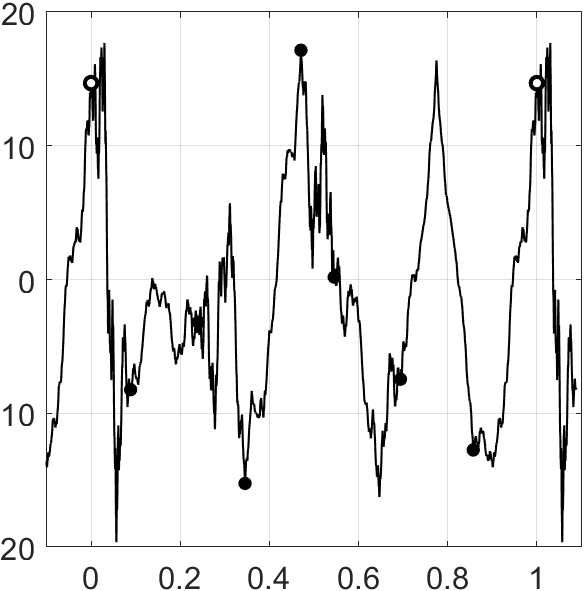}
\caption{Four-point subdivision with weight $\omega=-1/9$.}
\label{fig:gallery_5}
\end{figure}


\section{$G^1$-convergence of the scheme $S_1$}
\label{sec:Proof}

While the focus of this paper is on the construction of geometric Hermite 
subdivision schemes, we also want to outline a proof for the $G^1$-convergence 
of the Lane-Riesenfeld-type algorithm $S_1$. It is based on the results in 
\cite{dyn2012geometric}, but we want to remark that the notion of $G^1$ used 
there is special. It would be good to have a link between the approach of 
Dyn/Hormann and standard theory, which calls a curve $G^1$ if it has a 
$C^1$-reparametrization. 

Convergence and smoothness are local properties of $S_1$ in the 
sense that the Hermite couples $h^\ell_j$ depend only on $h^0_k,h^0_{k+1}$
for $k \le j2^{-\ell} \le k+1$, corresponding to the interval $[k,k+1]$ of
the standard parametrization. That is, convergence and smoothness
can be studied for initial data $H^0$ which are periodic 
in the sense that $h^0_j = h^0_{j+k}, j \in \Z$, for some $k \ge 2$.
This assumption avoids technical problems with unboundedness of sequences.
Throughout, as before, $H^\ell = (h^\ell_j)_{j \in \Z}, h^\ell_j = 
(p^\ell_j,\alpha^\ell_j)$. Further, we define the vectors
$d^\ell_j := p^\ell_{j+1}-p^\ell_j$ and  the pairs of angles 
$\beta^\ell_{j,J} = (\beta^\ell_{j,0},\beta^\ell_{j,1})$ by 
\[
 \beta^\ell_{j,0} := \alpha^\ell_j - \arg d^\ell_j
 ,\quad 
 \beta^\ell_{j,1} := \alpha^\ell_{j+1} - \arg d^\ell_j
 .
\]
The Euclidean disk in $\R^2$ with radius $3\pi/4$ is 
denoted by 
$B^* := \{\beta_J : \|\beta_J\|_2 \le 3\pi/4\}$.

\begin{theorem}
Let $H^0$ be periodic. If $d^0_j \neq 0$ and 
$\beta^0_{j,J} \in B^*$ for all $j \in \Z$, then
the iterates $H^\ell := S_1^\ell H^0$ define a sequence of polygons 
$(p^\ell_j)_{j \in \Z}$ converging to a $G^1$-limit in the sense of 
\cite{dyn2012geometric}. Moreover, the angles $\alpha^\ell_j$ converge
to the arguments of $d^\ell_j$,
\[
     \lim_{\ell \to \infty} \max_{j \in \Z} 
     \bigl(\alpha^\ell_j - \arg(d^\ell_j)\bigr)  = 0
     .
\]
\end{theorem}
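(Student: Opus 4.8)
The plan is to verify the sufficient conditions of \cite{dyn2012geometric}. Since $S_1$ is interpolatory and, by its locality, the refinement of a single edge depends only on the two Hermite couples at its ends, it suffices to show that (a) the edge lengths $|d^\ell_j|$ decay geometrically in $\ell$, uniformly in $j$, and (b) the angles the refined polygon makes at its vertices decay geometrically. Both are governed entirely by the relative angle pairs $\beta^\ell_{j,J} = (\beta^\ell_{j,0},\beta^\ell_{j,1})$, because the turning of the polygon at the vertex $p^\ell_{j+1}$ equals $\arg d^\ell_{j+1} - \arg d^\ell_j = \beta^\ell_{j,1} - \beta^\ell_{j+1,0}$, and the final claim is precisely that $\max_j \beta^\ell_{j,0}$ tends to $0$. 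Thus the whole theorem reduces to proving that the pairs $\beta^\ell_{j,J}$ contract geometrically to $0$, uniformly in $j$, with $B^*$ as an invariant domain.

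To this end I would first make the one-step transition explicit. Fixing an edge with data $\beta_J$ and writing, as in \eqref{eq:p2}, the interpolating approximate clothoid as $p = p_0 + d\,I(\beta,\cdot)/I(\beta)$, the inserted couple is $(p(1/2),\alpha(1/2))$ with $\alpha(1/2) = \arg d - \delta + \tilde f(\beta_J)$, where $\delta = \arg I(\beta)$ is the angle defect of Theorem~\ref{thm:2}. Computing the two child secants $p(1/2)-p(0)$ and $p(1)-p(1/2)$ then expresses the child pairs as $\beta^{\ell+1}_{2j,J} = M_1(\beta_J)$ and $\beta^{\ell+1}_{2j+1,J} = M_2(\beta_J)$, where $M_1,M_2$ are assembled from $\tilde f$, $\delta$, and the half-chord angles $\arg\int_0^{1/2}\exp(i\beta)$ and $\arg\int_{1/2}^1\exp(i\beta)$. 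Using $F(0,0)$ and $DF(0,0)$ from Theorem~\ref{thm:1} together with $\delta = O(\|\beta_J\|^3)$ one gets $M_i(0)=0$ and can read off the Jacobians at the origin; both have eigenvalues $1/2$ and $1/4$ and --- this is why the Euclidean ball is the natural domain --- Euclidean operator norm strictly below one (about $0.93$). Moreover $M_2 = J M_1 J$ for the coordinate swap $J$, so the two maps share this norm. This already yields geometric contraction of $\|\beta_J\|_2$ for sufficiently small data.

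The hard part will be upgrading this local contraction to a uniform one on all of $B^*$, whose radius $3\pi/4$ is far too large for the linearization alone to suffice. First, Lemma~\ref{lem:embedding} guarantees that for $\beta_J \in B^*$ the integrals $I(\beta)$, $\int_0^{1/2}\exp(i\beta)$ and $\int_{1/2}^1\exp(i\beta)$ are nonzero, so that $M_1,M_2$ are well defined and smooth there. I would then show there is a single $\mu<1$ with $\|M_i(\beta_J)\|_2 \le \mu\,\|\beta_J\|_2$ for all $\beta_J \in B^*$ and $i\in\{1,2\}$, combining the linear estimate near the origin with a compactness argument and a numerical verification of the ratio $\|M_i(\beta_J)\|_2/\|\beta_J\|_2$ over a fine grid on $B^*$, controlled between grid points by a Lipschitz bound --- exactly the style used for the angle defect in Theorem~\ref{thm:2} and Figure~\ref{figure2}. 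The contraction immediately gives invariance $M_i(B^*)\subseteq B^*$ and hence, inductively, $\max_j\|\beta^\ell_{j,J}\|_2 \le \mu^\ell \max_j\|\beta^0_{j,J}\|_2 \to 0$, which proves the stated angle limit.

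Finally, for the geometric decay of edge lengths I would use $|p(1/2)-p(0)| = |d|\cdot\bigl|\int_0^{1/2}\exp(i\beta)\bigr|\,/\,|I(\beta)|$ (and similarly for the other half), and bound this by $\rho\,|d|$ with a uniform $\rho<1$ on $B^*$ --- again via the lower bound on $|I(\beta)|$ from Lemma~\ref{lem:embedding} plus a compactness/grid estimate --- so that $\max_j|d^\ell_j| \le \rho^\ell \max_j|d^0_j|$. Geometric decay of both lengths and turning angles then makes the polygons a Cauchy sequence converging uniformly to a continuous limit curve satisfying the $G^1$ criterion of \cite{dyn2012geometric}, which completes the argument. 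The one genuinely delicate point throughout is the global contraction estimate on the non-small domain $B^*$; everything else is either linear-algebraic or a standard convergence argument.
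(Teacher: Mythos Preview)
Your proposal is correct and follows essentially the same route as the paper: reduce everything to a uniform contraction of $\|\beta_{j,J}\|_2$ and of the edge lengths on the compact domain $B^*$, verify these contraction ratios by a numerical/grid computation (the paper obtains $\|\rho\|_\infty \le 19/20$ and $\|r\|_\infty \le 4/5$ and displays the plots), and then invoke the summability criteria of \cite{dyn2012geometric}. Your additional Jacobian analysis at the origin is a nice explanation of why the Euclidean norm is the right choice, but the paper simply skips this and relies directly on the global numerical bound.
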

\begin{proof}
We consider the two-point problem in normal position,
$h_0 = (0,\beta_0), h_1 = (1,\beta_1)$, with secant $d = 1 - 0 = 1$
and angles $(\beta_0,\beta_1) \in B^*$. The clothoid average
\[
 h_{1/2} = (p_{1/2},\alpha_{1/2}) 
 := \frac{1}{2} h_0 \oplus \frac{1}{2} h_1
\]
yields new secants 
$d'_0 = p_{1/2}, d'_1 = 1 - p_{1/2}$ 
and new pairs of angles 
\[
 \beta'_{0,J} = (\beta_0 - \arg d'_0,\alpha_{1/2} - \arg d'_0)
 ,\quad 
 \beta'_{1,J} = (\alpha_{1/2} - \arg d'_1,\beta_1 - \arg d'_1)
 .
\]
Figure~\ref{fig:contraction} shows that the ratios
\[
 r(\beta) := \frac{\max\bigl\{|d'_0|,|d'_1|\bigr\}}{|d|}
 ,\quad 
 \rho(\beta) := \frac{\max\bigl\{\|\beta'_{0,J}\|_2,\|\beta'_{1,J}\|_2\bigr\}}
 {\|\beta_J\|_2}
 ,\quad 
 \beta \in B^*
 ,
\]
are bounded by $\|r\|_\infty \le 4/5$ and $\|\rho\|_\infty \le 19/20$, 
respectively.
Now, consider a pair of consecutive Hermite couples 
$h^{\ell-1}_j,h^{\ell-1}_{j+1}$
and its two descendants. The ratios
\[
 r^\ell_j := \frac{\max\bigl\{|d^{\ell}_{2j}|, 
 |d^{\ell}_{2j+1}|\bigr\}}{|d^{\ell-1}_j|}
 ,\quad 
 \rho^\ell_j := 
\frac{\max\bigl\{\|\beta^{\ell}_{2j,J}\|_2,\|\beta^{\ell}_{2j+1,J}\|_2\bigr\}}
 {\|\beta^{\ell-1}_{j,J}\|_2}
\]
are invariant under similarities so that we may move the configuration
to normal position, showing that $r^\ell_j \le \|r\|_\infty \le 4/5$ and
$\rho^\ell_j \le \|\rho\|_\infty \le 19/20$. The latter bound guarantees that 
$\beta^\ell_{j,J} \in B^*$ for all $j \in \Z$ and $\ell \in \N$.
Taking the maximum over all 
elements at level $\ell$ and iterating backwards, we 
obtain 
\[
  \max_j |d^{\ell}_j| \le (4/5)^\ell \max_j |d^0_j|
  ,\quad 
  \max_j \|\beta^{\ell}_{j,J}\|_2 \le (19/20)^\ell \max_j \|\beta^0_{j,J}\|_2
  .
\]
First, the sequence of maximal secant lengths is summable, implying that 
the sequence of polygons $(p^\ell_j)_{j \in \Z}$, converges to a 
continuous limit, see Theorem~3 and Proposition~4 in \cite{dyn2012geometric}.
Second,
to address $G^1$-continuity, we consider the exterior angles 
\[
  \delta^\ell_j := \arg(d^\ell_j) - \arg(d^\ell_{j-1}) = 
  \beta^\ell_{j-1,1} - \beta^\ell_{j,0}
\]
between consecutive secants. They are bounded by 
\[
 \max_{j \in \Z} |\delta^\ell_j| \le 2 \max_{j \in \Z} \|\beta^\ell_{j,J}\|_2
 \le 
 2 \cdot (19/20)^\ell \max_{j \in \Z} \|\beta^0_{j,J}\|_2
 .
\]
Hence, also the sequence of maximal exterior angles is summable, implying that 
the limit curve is $G^1$, see Theorem~18 in \cite{dyn2012geometric}.
Third, the final statement of the theorem is a simple consequence of 
$\alpha^\ell_j - \arg(d^\ell_j) = \beta^\ell_{j,0}$.

\begin{figure}
 \centering
 \includegraphics[height=5cm]{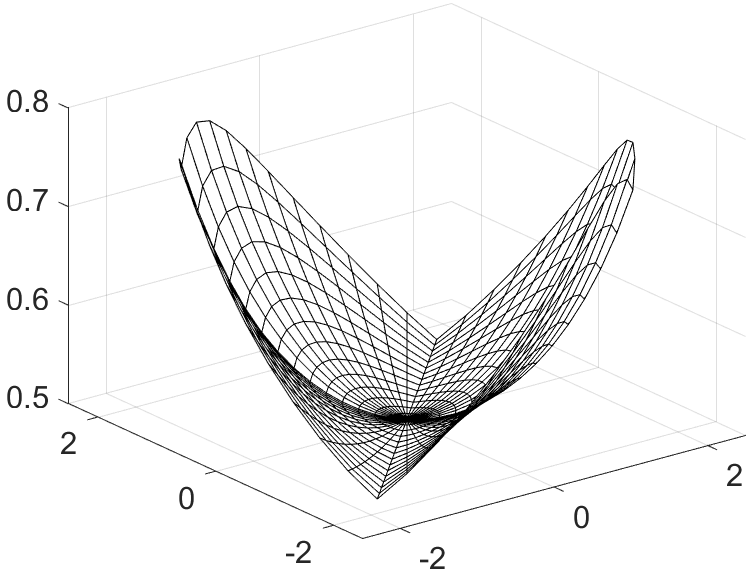}
 \qquad 
 \includegraphics[height=5cm]{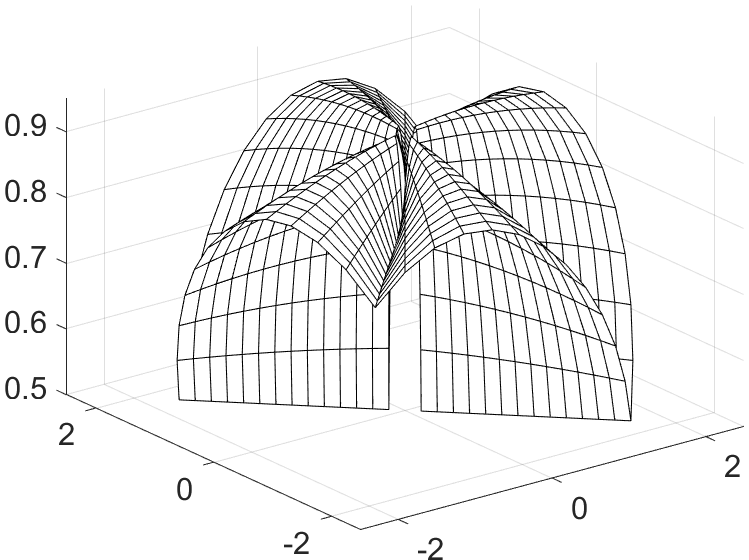}
 \caption{Contraction rates for lengths {\em (left)} 
 and angles {\em (right)}.}
 \label{fig:contraction}
\end{figure}

\end{proof}
\section{Conclusion and Outlook}
\label{sec:Conclusion}

In this paper, we have proposed geometric Hermite subdivision schemes and we 
have demonstrated that they are a reasonable means for designing curves with 
prescribed tangents or normals.
More precisely, we have first proposed an explicit strategy to 
approximate Hermite interpolating clothoids and used it to define the
clothoid averages.
Then we have used clothoidal averaging to define geometric Hermite subdivision 
schemes.
Particular instances considered 
were the geometric Hermite analogues of the 
Lane-Riesenfeld schemes and of the four-point scheme.
Examples demonstrate that these schemes yield very convincing results.
Finally, we have presented some first smoothness results. More precisely,
we obtain smoothness in the sense of \cite{dyn2012geometric} for the first order 
geometric Lane-Riesenfeld Hermite scheme. However, the notion of 
\cite{dyn2012geometric} is not standard; 
standard theory calls a curve $G^1$ if it has a $C^1$ reparametrization.
It would be desirable to obtain related $G^1$- or even 
$G^2$-smoothness results for wider classes of schemes.
This is an interesting topic of ongoing research.

\bibliographystyle{alpha}
\bibliography{CAandHS}


\end{document}